\documentclass[11pt, english]{article}
\usepackage{geometry}
\usepackage{amsthm}
\usepackage{amsmath}
\usepackage{amssymb}
\usepackage{graphicx}
\date{}
\makeatletter

%\DeclareRobustCommand{\cyrtext}{}%
%  \fontencoding{T2A}\selectfont\def\encodingdefault{T2A}}
%\DeclareRobustCommand{\textcyr}[1]{\leavevmode{\cyrtext #1}}
%\AtBeginDocument{\DeclareFontEncoding{T2A}{}{}}

\numberwithin{figure}{section}
\theoremstyle{plain}

\theoremstyle{plain}

%\@ifundefined{date}{}{\date{}}
%\usepackage[russian]{babel}
\newtheorem{theorem}{Theorem}[section]
\newtheorem{predl}{Proposition}[section]
\newtheorem{opr}{Definition}[section]
\newtheorem{lm}{Lemma}[section]

\newtheorem{conj}{Conjecture}[section]
\numberwithin{equation}{section}

\title{Drinfeld Yangian of the queer Lie superalgebra. I}
\author{V. Stukopin}

\providecommand{\propositionname}{Proposition}
\providecommand{\theoremname}{Theorem}

\begin{document}

\maketitle

\begin{abstract}

Drinfeld Yangian of a queer Lie superalgebra is defined as the quantization of a Lie bisuperelgebra of twisted polynomial currents. An analogue of the new system of generators of Drinfeld is being constructed. It is proved for the partial case Lie superalgebra $sq_1$ that this so defined Yangian and the Yangian, introduced earlier by M. Nazarov using the Faddeev-Reshetikhin-Takhtadzhjan approach, are isomorphic.
\\

{\bf Mathematical Subject Classification (2000)}. Primary 17B37; Secondary 81R50, 13F60.
\\

{\bf Keywords} Super Yangian, Queer Lie Superalgebra, Drinfel'd Yangian, Hopf superalgebra.
\end{abstract}

\markright{Drinfeld Yangian of the queer Lie superalgebra. I}

\section{Introduction}

 The Yangians of simple Lie superalgebras were defined by Drinfeld in the mid-1980s (see, for example, \cite{Dr}, \cite{Dr1}, \cite{Dr2}, \cite{Dr3}, \cite{Ch-Pr}), as quantizations (or non-commutative deformations in the Hopf algebra class) of Lie polynomial Lie bisuperalgebras with a bracket, determined by a rational $r$-matrix (the Yang matrix). But the Yangian of general linear algebra appeared earlier in the works of the mathematical physicists of the Leningrad school L.D. Faddeev. In the second case, it was determined using the defining relations given by the Yang quantum $R$-matrix.  V.Drinfeld established the equivalence of these two different definitions of the Yangian for a special linear algebra, but he did not publish proof of this fact. The first published evidence appeared much later (first for $\mathfrak{sl}_2$-case and later, almost 20 years after the first works of Drinfeld and for the case $\mathfrak{sl}_n$ (see \cite{Mol})).

Later, in the early 90s of the last century, L.D. Faddeev, N.Yu. Reshetikhin and L. Takhtadzhjan published a detailed description of the approach to the definition of quantum groups based on the use of the quantum $R$- matrix for defining a system of defining relations (\cite{Mol}). Such an approach was naturally related to the quantum method of the inverse scattering problem developed by them, which is now more often called the algebraic Bethe ansatz. Because, it is natural to call their name the traditional definition for the mathematical physicists of the Yangian, in contrast to V. Drinfeld's definition.
 In the 90s of the last century, the Yangians of Lie superalgebras  began to be investigated (see the works \cite{N}, \cite{St}).
Moreover, in the first paper the definition of the Yangian of the general linear Lie superalgebra was given by the Faddeev-Reshetikhin-Takhdazhjan approach (the FRT approach), and in the second work, the Yangian of the special linear superalgebra was defined in accordance with Drinfeld's approach. The Yangian thus defined we call the Drinfeld Yangian. We note that the Yangians of Lie superalgebras have now found numerous applications in quantum field theory, in particular, in the quantum theory of superstrings (\cite{D-N-W}, \cite{S-T}).

For definition Yangian  in accordance with the Drinfeld approach it is necessary to have a nondegenerate invariant bilinear form on the original Lie algebra (or Lie superalgebra), which is used in the definition of the Young matrix or for definition cobracket in Lie bisuperalgebra. But such a form is absent in the queer Lie superalgebra, which prevents the extension of the Drinfeld definition of Yangian on  this case. Nevertheless, as will be seen below, instead of deforming the superalgebra of polynomial currents, one can consider the deformation of a twisted superalgebra of currents.
The Yangians associated with the queer Lie superalgebra (see \cite{K}, \cite{F-S}) were first constructed by M. Nazarov (\cite{N2}), using approach Faddeev-Reshetikhin-Takhdazhjan. Later in the paper by M. Nazarov and A. Sergeev (\cite{N1}) was given definition of Yangian of the queer Lie superalgebra using a general approach based on the centralizer construction of G. Olshansky. This definition was later used in \cite{PS} to establish the connection between the Yangian of queer Lie superalgebra and W-algebras, introduced in \cite{Pr}.

In the paper \cite{St21}, the Yangian of the queer Lie superalgebra was introduced, following the Drinfeld approach  as a quantization of the Lie bisuperalgebra of twisted currents. There was also obtained his description in terms of the current system of generators and defining relations (a new system of generators in the terminology of V. Drinfel'd, \cite{Dr2}). It should be noted that the Yangian of a queer Lie superalgebra is defined as the deformation of a Lie bisuperalgebra of twisted polynomial currents in which the structure of a Lie superalgebra is given by a rational r-matrix (see  \cite{St2}).

We note that this approach can not be fully extended to quantizations of other twisted superalgebras of currents, since, generally speaking, they can not be endowed with the structure of a Lie bisuperalgebra (see \cite{St3}). Nevertheless, with a small modification, it can be used for  definition of twisted Yangians, which, generally speaking, are not Hopf superalgebras.

In this paper we, which is the first part of the work devoted to the study of the Yangian of the queer Lie superalgebra, we define the Yangian of the queer Lie superalgebra based on the Drinfeld approach, and establish a connection with the  Yangian Nazarov of the queer Lie superalgebra in the particular case of the Lie superalgebra $q_1$. In the second part of the paper, we consider this connection in complete generality.
A few words about the organization of this work. In the second section we recall the definitions of the queer Lie superalgebra and the twisted current  Lie  bisuperelgebra. In the third section we describe the deformation of the twisted current Lie bisupergalgebra and introduce the main actor, the Yangian of the queer Lie superalgebra. In the third section we formulate the main result of the paper - the description of the Yangian in terms of the analogue of the new system of Drinfeld generators.  In the section 4 we recall the definition of the Yangian of the queer Lie superalgebras given by M. Nazarov and prove the theorem on the isomorphism of the Drinfeld Yangian of the queer Lie superalgebra and the Yangian introduced by M. Nazarov for the partial case $Q_1$. Our proof is based on well-known ideas and essentially uses the triangular decomposition of the transfer matrix used in definition of the defining relations in the Yangian introduced by M. Nazarov and the construction of the quasi-determinants that was introduced by I.M. Gelfand  and V.S. Retakh (see \cite{G-R1}). In the concluding section 5 we give idea of the proof of general case using quasi-determinant theory (see \cite{G-R1}, \cite{MR}) and  uses some features of the root system of the queer Lie superalgebra.

We note that simplifying the proof of the theorem on isomorphism from section 5, we somewhat changed the current system of generators in this paper in comparison with the papers \cite{St12}, \cite{St21}, \cite{St22} which led to a somewhat simpler and more natural form of defining relations.

We shall use the following standard notation. We denote by $K[u]$, $K[[u]]$ the ring of polynomials, respectively, of formal power series, with coefficients in the ring $K$. Let $\mathbb{C}$ be the field of complex numbers, $\mathbb{N}$ the set of natural numbers, $\mathbb{Z}$ the ring of integers, $\mathbb{Z}_ + $ be  the set of nonnegative numbers, $\hbar$  will always denote the deformation parameter. The sign $\square$  denotes the end of the proof.

\section{Twisted current Lie bisuperalgebra} \label{s2}

In this section we define the classical analogue of the Yangian of the queer Lie superalgebra - the current twisted Lie bisupergalgebra.

\subsection{Queer Lie superalgebra} \label{s21}

In this subsection we recall the definition of a queer Lie superalgebra (see more detailed exposition in the papers \cite{K}, \cite{F-S}).

Let  $\mathbb{C}(n|n) = \mathbb{C}^n \oplus \mathbb{C}^n$ be a $\mathbb{Z}_2$-graded vector space of dimension  $(n|n)$ over complex number field $\mathbb{C}$. Let \\ $(e_{-n}, e_{-n+1}, \ldots, e_{-1}, e_1, \ldots, e_n)$ be a standard base in  $\mathbb{C}(n|n)$, and  $End(\mathbb{C}(n|n))$ be a superalgebra of linear operators acting in  $\mathbb{C}(n|n)$.

The basis in $End(\mathbb{C}(n|n))$ form matrices  $E_{a,b}$, $-n \leq a, b \leq n$, $ab \neq 0$ and function of parity  $p$ for $E_{a,b}$ is defined by formula:
\begin{equation}
p(E_{a,b}) = |a| + |b|,
\end{equation}
where $|a| = p(a) = 0$,  if $a>0$ and $|a|=p(a)=1$, for $a<0$, $|a|, |b| \in \mathbb{Z}_2$.
I recall that general linear superalgebra Lie $\mathfrak{gl}(n|n)$ is defined as vector space (associative superalgebra) $End(\mathbb{C}(n|n))$ with (super)bracket  $[\cdot, \cdot]$,  given by formula:
\begin{equation}
[x, y] = x \cdot y - (-1)^{|x||y|} y \cdot x.
\end{equation}
Super trace on  $\mathfrak{gl}(n|n)$ is given by formula  $str(E_{ab}) = \delta_{ab}(-1)^{1 + |a|}$ on on the elements of the basis and extends linearly to all other elements of the vector space $\mathfrak{gl}(n|n)$.

Recall that a special linear Lie superalgebra  $\mathfrak{sl}(n|n)$ is defined as:
\begin{equation}
\mathfrak{sl}(n|n) = \{A \in \mathfrak{gl}(n|n)| str(A) = 0\}
\end{equation}

Let $$\sigma: \mathfrak{gl}(n|n) \rightarrow \mathfrak{gl}(n|n),$$
be an involutive automorphism given by the formula:
$$\sigma : E_{a,b} \mapsto E_{-a,-b}.$$
Note, that  $\sigma$ transfer $\mathfrak{sl}(n|n)$ into $\mathfrak{sl}(n|n)$. Let also
$$J = \sum_{i=1}^n (E_{i,-i} - E_{-i,i}).$$

\begin{opr}
A Lie superalgebra $q_n$ can be defined equivalently either as a centralizer of $J$ in $\mathfrak{gl}(n|n)$, or as the set of fixed points of an involutive automorphism $\sigma: \mathfrak{gl}(n|n) \rightarrow \mathfrak{gl}(n|n).$ Let also  $sq_n = [q_n, q_n].$
\end{opr}

We also note that $sq_n$ is the Lie subalgebra of the Lie superalgebra $q_n$, consisting of $2n \times 2n$ matrices from which the left lower and upper right blocks of $n \times n$ blocks have a zero trace, in contrast to diagonal blocks, not required to have a zero trace. Note that $sq_n$ contains the identity matrix.

A simple Lie superalgebra $Q_n$ is defined (see \cite{K}) as a quotient Lie superalgebra
$$Q_n = sq_n / CI_{2n},$$
which is also denoted by $psq_n.$

We choose the following basis in $q_n$
\begin{eqnarray}
&E^0_{a,b} = E_{a,b} + E_{-a,-b}, \quad \\
&E^1_{a,b} = E_{-a,b} + E_{a,-b}, \quad 1 \leq a,b \leq n. \quad
\end{eqnarray}

We note that the even part is isomorphic to the Lie algebra $\mathfrak{gl}_n$, and the odd part is also isomorphic to $\mathfrak{gl}_n$, considered as a $\mathfrak{gl}_n$-module relative to the adjoint action. At the same time, for $sq_n$ its even part is isomorphic to $\mathfrak{gl}_n$, and the odd part is isomorphic to $\mathfrak{sl}_n$, so that the following matrices form a basis there:
\begin{eqnarray}
&E^0_{a,b}, E^1_{a,b}, \quad 1 \leq a \neq b \leq n, \quad \\
&E^0_{a,a},  \quad  1 \leq a \leq n, \quad\\
&H^1_{a} = E^1_{a,a} - E^1_{a+1,a+1}, \quad 1 \leq a \leq n-1.
\end{eqnarray}

We also note that there exists a connection between $q_n$ and the (super) Hecke-Clifford algebras. We can describe $q_n$ as a special linear (super) algebra with values in the some Clifford algebra $C$, which is convenient for studying representations of the Lie superalgebra $q_n$. Namely, let $C = \left\langle 1, c \right \rangle$, where $c$ is an odd element, and $c^2 = 1$. Then the Lie superalgebra $\mathfrak{sl}_n(C)$ is isomorphic in the category of Lie superalgebras to $q_n$. This fact, despite its obviousness, is sometimes convenient to keep in mind.

Now we give a description of the strange Lie superalgebra $sq_n$, which is analogous to the description of a simple Lie algebra in terms of generators and Serre relations.

\begin{predl}
The Lie superalgebra $sq_n$ is isomorphic to the Lie superalgebra $\mathfrak {g}$ generated by the generators $x^{\pm}_{i,j}, h_{i,1}, 1\leq i \leq n-1, j=0,1$ and $h_{i,0}, 1 \leq i \leq n$  (generators with the second index $j = 1$ are odd) that satisfy the following system of defining relations:
\begin{eqnarray}
&[h_{i_1,1}, h_{i_2,1}] = 2 \delta_{i_1,i_2}(h_{i_1,0} + h_{i_1+1,0}) - 2\delta_{i_1+1,i_2}h_{i_2,0}, \quad i_1 \leq i_2, \nonumber\quad \\
&[h_{i_1,j_1}, h_{i_2,j_2}]=0, \quad (j_1,j_2) \neq(1,1), \label{12.1}\\
&[h_{i_1,0}, x^{\pm}_{i_2,j_2}]= \pm(\delta_{i_1,i_2} - \delta_{i_1,i_2+1})x^{\pm}_{i_2,j_2}, \quad\\
&[h_{i_1,1}, x^{\pm}_{i_2,j_2}]= (\pm 1)^{j_2+1}(2\delta_{i_1,i_2}\cdot\delta_{j_2,0} - (-1)^{j_2}\delta_{i_1,i_2+1}-\delta_{i_1+1,i_2})x^{\pm}_{i_2,j_2+1},\label{12.4} \quad\\
&[x^+_{i_1,j_1}, x^-_{i_2,j_2}] = \delta_{i_1,i_2}h_{i_1,j_1+j_2}, \quad j_1 \neq j_2, \quad \nonumber\\
&[x^+_{i_1,j_1}, x^-_{i_2,j_1}] = \delta_{i_1,i_2}(h_{i_1,0} - (-1)^{j_1}h_{i_1+1,0}), \quad \label{12.5}\\
&[x^{\pm}_{i_1,j_1}, x^{\pm}_{i_2,j_2}] = (\pm1)^{j_1+j_2+1}[x^{\pm}_{i_1,j_1+1}, x^{\pm}_{i_2,j_2+1}], i_1 \leq i_2; \quad \label{12.6}\\
&ad(x^{\pm}_{i_1,j_1})^2(x^{\pm}_{i_2,j_2})=0, \quad 1 \leq i_1, i_2 \leq n-1, j_1, j_2 = 0,1, \quad \nonumber \\
&|i_1 - i_2| \neq 1, j_1, j_2 = 0,1. \qquad \label{12.7}
\end{eqnarray}

\end{predl}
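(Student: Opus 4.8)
The plan is to establish the isomorphism by an explicit map in both directions. First I would define a homomorphism $\varphi: \mathfrak{g} \to sq_n$ by sending the abstract generators to concrete matrices: $x^{\pm}_{i,0} \mapsto E^0_{i,i\pm 1}$ (with the appropriate sign/index convention so that $x^+$ goes to the upper and $x^-$ to the lower off-diagonal), $x^{\pm}_{i,1} \mapsto E^1_{i,i\pm 1}$, $h_{i,0} \mapsto E^0_{i,i}$, and $h_{i,1} \mapsto E^1_{i,i} - E^1_{i+1,i+1} = H^1_i$. The first and main step is then to verify that these images satisfy all the defining relations \eqref{12.1}--\eqref{12.7}. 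This is a finite computation in $\mathfrak{gl}(n|n)$: using $E^0_{a,b} = E_{a,b} + E_{-a,-b}$ and $E^1_{a,b} = E_{-a,b} + E_{a,-b}$ together with the elementary commutation rules for the $E_{a,b}$ and the parity function $p(E_{a,b}) = |a|+|b|$, each bracket on the left reduces to a combination of basis elements that one matches against the right-hand side. Particular care is needed with the sign conventions hidden in the $(\pm 1)^{j}$ factors and with the fact that $E^1_{a,b}E^1_{c,d}$ picks up signs from the odd parity of $E^1$; I expect the Serre-type relations \eqref{12.7} and the ``mixing'' relations \eqref{12.4}, \eqref{12.6} to be the most delicate, since they encode exactly how the odd generators interact with the $\mathfrak{sl}_n$-structure of the odd part.

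The second step is to show $\varphi$ is surjective onto $sq_n$. By the structure recalled above, $sq_n$ is spanned by $E^0_{a,b}, E^1_{a,b}$ for $a\neq b$, the $E^0_{a,a}$, and the $H^1_a$; one shows these are all generated by the images of the Chevalley-type generators by taking iterated brackets of the simple root vectors, exactly as in the classical $\mathfrak{sl}_n$ argument. The even subalgebra generated by the $x^{\pm}_{i,0}$ and $h_{i,0}$ is visibly all of $\mathfrak{gl}_n$ (sitting diagonally as the $E^0$'s), and then bracketing with $x^{\pm}_{i,1}$ and using relation \eqref{12.4} propagates to produce every $E^1_{a,b}$ with $a\neq b$ and every $H^1_a$; since $sq_n$'s odd part is $\mathfrak{sl}_n$, that is everything.

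The third and hardest step is injectivity, equivalently showing $\dim \mathfrak{g} \le \dim sq_n$ (the reverse inequality being surjectivity), so that $\varphi$ is an isomorphism. The natural route is a PBW-type argument: set up a triangular decomposition $\mathfrak{g} = \mathfrak{n}^- \oplus \mathfrak{h} \oplus \mathfrak{n}^+$ where $\mathfrak{h}$ is spanned by the $h_{i,j}$ and $\mathfrak{n}^{\pm}$ by the positive/negative root vectors obtained as iterated brackets of the $x^{\pm}_{i,j}$, and then use the defining relations to show each of these pieces is spanned by the expected finite set — for $\mathfrak{n}^{\pm}$ one uses \eqref{12.6} and \eqref{12.7} to show that every long bracket collapses to a single root vector with two internal labels $j=0,1$, and for $\mathfrak{h}$ one uses \eqref{12.1} and \eqref{12.5} to bound its dimension by $(n-1) + (n-1) + n$. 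Counting gives exactly $\dim sq_n$, so the surjection $\varphi$ must be an isomorphism. The main obstacle is precisely controlling the odd root spaces: because the $x^{\pm}_{i,1}$ are odd, $[x^{\pm}_{i,1},x^{\pm}_{i,1}]$ need not vanish, and one must check via \eqref{12.5}--\eqref{12.6} that no ``extra'' independent odd elements are produced beyond those matching $E^1_{a,b}$; an alternative that sidesteps the full PBW bookkeeping is to note that $sq_n \cong \mathfrak{sl}_n(C)$ with $C$ the rank-one Clifford algebra, realize the abstract presentation as the Serre presentation of $\mathfrak{sl}_n$ with coefficients allowed in $C$, and invoke the known presentation of $\mathfrak{sl}_n$ — but making that precise still requires tracking the Clifford signs, which is where the real work lies.
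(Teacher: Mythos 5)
Your outline is correct and is the standard route for such Serre-type presentations, but be aware that the paper itself contains no proof of this proposition: it is stated bare, and the computations that follow it in the text only check that certain explicit matrices in $\mathfrak{gl}(n|n)$ satisfy a (different, larger) list of relations, which corresponds at most to your first step. So your proposal supplies strictly more logical structure than the source: the homomorphism $\varphi$ from the presented superalgebra $\mathfrak{g}$ to $sq_n$ (your assignment $x^{\pm}_{i,0}\mapsto E^0_{i,i\pm 1}$, $x^{\pm}_{i,1}\mapsto E^1_{i,i\pm 1}$, $h_{i,0}\mapsto E^0_{i,i}$, $h_{i,1}\mapsto H^1_i$ does satisfy the listed brackets, e.g. $[E^0_{i,i+1},E^1_{i+1,i}]=H^1_i$ and $[E^1_{i,i+1},E^1_{i+1,i}]=E^0_{i,i}+E^0_{i+1,i+1}$), surjectivity by the usual $\mathfrak{sl}_n$-type generation argument, and injectivity by a triangular decomposition plus dimension count. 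Two caveats on that last step, which is indeed where all the work lies. First, your bound on the Cartan part is miscounted: the span of the generators $h_{i,0}$, $1\le i\le n$, and $h_{i,1}$, $1\le i\le n-1$, has dimension at most $n+(n-1)=2n-1$, not $(n-1)+(n-1)+n$; with the correct figure the total $2\,n(n-1)+(2n-1)=2n^2-1=\dim sq_n$ closes the argument, whereas your figure would leave the upper bound strictly larger than $\dim sq_n$ and the dimension argument would not conclude. Second, the collapse of $\mathfrak{n}^{\pm}$ to exactly one even and one odd vector per positive root of $\mathfrak{sl}_n$ is asserted rather than argued; this weight-space analysis (adjoint action of the $h_{i,0}$, the Serre relations, and the relation converting brackets of two odd root vectors into brackets of even ones) is the one genuinely nontrivial point, and since the paper offers nothing to lean on here you would have to carry it out in full --- or else make precise your alternative via $sq_n\cong\mathfrak{sl}_n(C)$ with $C$ the rank-one Clifford algebra, which is likely the cleanest way to finish.
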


Consider the following generators of Lie superalgebras $gl(n,n)$:

$$h_{i,0,0} := E_{i,i} + E_{-i,-i}, \quad e_{i,0,0} := E_{i,i+1} + E_{-i,-i-1}, \quad f_{i,0,0} := E_{i+1,i} + E_{-i-1,-i},$$
$$h_{i,0,1} := E_{i,i} - E_{-i,-i}, \quad e_{i,0,1} := E_{i,i+1} - E_{-i,-i-1}, \quad f_{i,0,1} := E_{i+1,i} - E_{-i-1,-i},$$
$$h_{i,1,0} := E_{i,-i} + E_{-i,i}, \quad e_{i,1,0} := E_{i,-i-1} + E_{-i,i1}, \quad f_{i,1,0} := E_{i+1,-i} + E_{-i-1,i},$$
$$h_{i,1,1} := E_{i,-i} - E_{-i,i}, \quad e_{i,1,0} := E_{i,-i-1} - E_{-i,i1}, \quad f_{i,1,0} := E_{i+1,-i} - E_{-i-1,i}.$$

These generators, as is easily verified by direct computations, satisfy the following system of defining relations.

\begin{eqnarray}
&[h_{i,0,r}, h_{j,0,s}] = 0; \quad [h_{i,0,r}, h_{j,1,s}] = 0; \quad \\
&[h_{i,1,0}, h_{j,1,0}] = 2\delta_{i,j} h_{i,0,0}; \quad [h_{i,0,0}, h_{j,1,1}] = 0; \quad\\
& [h_{i,0,1}, h_{j,1,1}] = 2\delta_{i,j} h_{i,1,0}; \quad [h_{i,1,0}, h_{j,1,1}] = 0; \quad\\
&[h_{i,0,0}, e_{j,k,r}] = (\delta_{i,j} - \delta_{i-1,j}) e_{j,k,r}; \quad [h_{i,0,0}, f_{j,k,r}] = -(\delta_{i,j} - \delta_{i-1,j}) f_{j,k,r}, \quad  k, r = 0, 1; \quad\\
&[h_{i,0,1}, e_{j,1,r}] = (\delta_{i,j} + \delta_{i-1,j}) e_{j,1,r+1}; \quad [h_{i,0,1}, f_{j,1,r}] = (\delta_{i,j} + \delta_{i-1,j}) f_{j,1,r+1}, \quad r=0, 1; \quad\\
&[h_{i,1,0}, e_{j,0,0}] = (\delta_{i,j} - \delta_{i-1,j}) e_{j,1,0}; \quad [h_{i,1,0}, f_{j,0,0}] = -(\delta_{i,j} - \delta_{i-1,j}) f_{j,1,0}; \quad\\
&[h_{i,1,0}, e_{j,0,1}] = -(\delta_{i,j} + \delta_{i-1,j}) e_{j,1,1}; \quad [h_{i,1,0}, f_{j,0,1}] = -(\delta_{i,j} + \delta_{i-1,j}) f_{j,1,1}; \quad\\
&[h_{i,1,0}, e_{j,1,0}] = (\delta_{i,j} + \delta_{i-1,j}) e_{j,0,0}; \quad [h_{i,1,0}, f_{j,1,0}] = (\delta_{i,j} + \delta_{i-1,j}) f_{j,0,0}; \quad\\
&[h_{i,1,0}, e_{j,1,1}] = -(\delta_{i,j} - \delta_{i-1,j}) e_{j,0,1}; \quad [h_{i,1,0}, f_{j,1,1}] = (\delta_{i,j} - \delta_{i-1,j}) f_{j,0,1}; \quad\\
&[h_{i,1,1}, e_{j,1,0}] = (\delta_{i,j} + \delta_{i-1,j}) e_{j,0,1}; \quad [h_{i,1,1}, f_{j,1,1}] = (\delta_{i,j} + \delta_{i-1,j}) f_{j,0,1}; \quad\\
&[h_{i,1,1}, e_{j,1,1}] = -(\delta_{i,j} + \delta_{i-1,j}) e_{j,0,0}; \quad [h_{i,1,1}, f_{j,1,1}] = -(\delta_{i,j} + \delta_{i-1,j}) f_{j,0,0}; \quad\\
&[e_{i,0,r}, f_{j,0,s}] = \delta_{i,j}(h_{i,0, r+s} - h_{i+1,0, r+s}), \quad [e_{i,i,0}, f_{j,i+1,0}] = \delta_{i,j}(h_{i,1, 0} - h_{i+1,1, 0}), \quad\\
&[e_{i,1,0}, f_{j,1,0}] = \delta_{i,j}(h_{i,0, 0} + h_{i+1,0, 0}), \quad [e_{i,k,0}, f_{j,k+1,0}] = \delta_{i,j}(h_{i,1, 0} - h_{i+1,1, 0}), \quad\\
&[e_{i,0,1}, f_{j,1,1}] = \delta_{i,j}(h_{i,1, 0} + h_{i+1,1, 0}), \quad [e_{i,1,1}, f_{j,1,1}] = -\delta_{i,j}(h_{i,0, 0} + h_{i+1,0, 0}), \quad\\
&-[e_{i,1,0}, f_{j,1,1}] = [e_{i,1,1}, f_{j,1,0}] = \delta_{i,j}(h_{i,0,1} - h_{i+1,0,1}). \quad
\end{eqnarray}

\subsection{Twisted current Lie bisuperalgebra}\label{s22}

Let's consider current Lie superalgebra   $\mathfrak{gl}(n|n)\otimes \mathbb{C}[u^{\pm 1}]$. Let

\begin{equation}
L_{tw}q_n = \{X(u) \in \mathfrak{gl}(n|n)\otimes \mathbb{C}[u^{\pm 1}] | \tilde{\tau}(X(u)) = X(-u) \}.
\end{equation}
Here  $\tilde{\tau} = \sigma \otimes 1.$
Let's also
$$L'_{tw}q_n = [L_{tw}q_n,  L_{tw}q_n].$$

The superalgebra of currents described above will also be the main object of study in this section. But here it will be more convenient for us to slightly change the notation and the order of presentation.

Let   $\mathfrak{g} = A(n-1,n-1)$, $\sigma: \mathfrak{g} \rightarrow \mathfrak{g}$ be above defined  automorphism of second order, $\epsilon = -1, \mathfrak{g}^j = Ker(\sigma - \epsilon^jE), \mathfrak{g}=\mathfrak{g}^0 \oplus \mathfrak{g}^1$.
Let's extend  $\sigma$  to automorphism  $\tilde{\sigma}: \mathfrak{g}((u^{-1})) \rightarrow \mathfrak{g}((u^{-1}))$, of Laurent series with values in  $\mathfrak{g}$ by formula:
\begin{equation}
\tilde{\sigma}(x\cdot u^j)= \sigma(x)(-u)^j.
\end{equation}
Let's consider the following Manin triple  $(\mathfrak{P}, \mathfrak{P}_1, \mathfrak{P}_2)$:
$$(\mathfrak{P}=\mathfrak{g}((u^{-1}))^{\tilde{\sigma}}, \mathfrak{P}_1 = \mathfrak{g}[u]^{\tilde{\sigma}},
\mathfrak{P}_2=(u^{-1}\mathfrak{g}[[u^{-1}]])^{\tilde{\sigma}}.$$

Let's note, that $\mathfrak{g}[u]^{\tilde{\sigma}}= L'_{tw}q_n.$

Let's define the bilinear form  $\left\langle \cdot, \cdot \right\rangle$ on $\mathfrak{P}$ by formula:
\begin{equation}
\left\langle f, g \right\rangle=res(f(u),g(u))du,
\end{equation}
where $res(\sum_{k=-\infty}^n a_k \cdot u^k):= a_{-1}$, $(\cdot, \cdot)$ be an invariant bilinear form on $\mathfrak{g}$.

It is clear that $\mathfrak{P}_1, \mathfrak{P}_2$ are isotropic subsuperalgebras with respect to the form \\
$\left\langle \cdot, \cdot \right\rangle$. It is also easy to verify that the following decompositions hold:
\begin{equation} \label{equation31}
\mathfrak{g}[u]^{\tilde{\sigma}} = \bigoplus_{k=0}^{\infty}(\mathfrak{g}^0 \cdot u^{2k} \oplus \mathfrak{g}^1 \cdot u^{2k+1})
\end{equation}

\begin{equation}
\mathfrak{g}((u^{-1}))^{\tilde{\sigma}} = \bigoplus_{k \in Z}(\mathfrak{g}^0 \cdot u^{2k} \oplus \mathfrak{g}^1 \cdot u^{2k+1})
\end{equation}

We describe the structures of the Lie bisuperalgebra on $\mathfrak{g}[u]^{\tilde{\sigma}}$. Let $\{e_i\}$ be a basis in $\mathfrak{g}^0$ and $\{e^i\}$  be its dual relatively bilinear form  $(\cdot, \cdot)$ basis in $\mathfrak{g}^1$. Let  $\mathfrak{t}_0 = \sum e_i \otimes e^i, \mathfrak{t}_1 = \sum e^i \otimes e_i, \mathfrak{t} = \mathfrak{t}_0 + \mathfrak{t}_1$. Let's consider also the basis $\{e_{i, k} \}$ in $\mathfrak{P}_1$ and its dual $\left\langle\cdot, \cdot \right \rangle$ basis $\{e^{i, k}\} $ in $\mathfrak{P}_2$, which are defined by the following formulas:
\begin{eqnarray}
&e_{i,2k}=e_i \cdot u^{2k}, e_{i,2k+1}=e^i \cdot u^{2k+1}, k \in \mathbb{Z}_+,&\\
&e^{i,2k}=e^i \cdot u^{-2k-1}, e^{i,2k+1}=e_i \cdot u^{-2k-2}, k \in \mathbb{Z}_+.
\end{eqnarray}

We calculate  the canonical element $r$, which determines the commutator in $\mathfrak{P}$.
\begin{eqnarray*}
&r= \sum e_{i,k} \oplus e^{i,k} =
\sum_{k \in Z} \sum_i ( e_i \cdot v^{2k} \otimes e^i \cdot u^{-2k-1} + \\
&e^i \cdot v^{2k+1} \otimes e_i \cdot u^{-2k-2}) =
\sum_{k=0}^{\infty}((\sum e_i \otimes e^i) \cdot u^{-1}(\dfrac{v}{u})^{2k} + \\
&\sum_{k=0}^{\infty} ((\sum e^i \otimes e_i) \cdot u^{-1} (\dfrac{v}{u})^{2k+1}) =
\mathfrak{t}_0 \dfrac{u^{-1}}{(1 - (v/u)^2)} + \mathfrak{t}_1 \dfrac{u^{-1}(v/u)}{1 - (v/u)^2} = \\
&\dfrac{\mathfrak{t}_0 \cdot u}{(u^2 - v^2)} +
\dfrac{\mathfrak{t}_1 \cdot v}{u^2 - v^2} = \frac{1}{2} (\dfrac{1}{u-v} + \dfrac{1}{u+v}) \mathfrak{t}_0 +
(1/2) (\dfrac{1}{u-v} - \dfrac{1}{u+v}) \mathfrak{t}_1 = \\
&\dfrac{1}{2} \dfrac{\mathfrak{t}_0 + \mathfrak{t}_1}{u-v} + \frac{1}{2} \dfrac{\mathfrak{t}_0 - \mathfrak{t}_1}{u+v}=
\frac{1}{2} \sum_{k \in \mathbb{Z}_+} \dfrac{(\sigma^k \otimes id)(\mathfrak{t})}{u-\epsilon^k \cdot v}.%\)
\end{eqnarray*}
We denote by  $r_{\sigma}(u,v):=r$. Then the formula for the commutator $\delta$ takes the following form:
$$\delta : a(u) \rightarrow [a(u) \otimes 1 + 1 \otimes a(v), r_{\sigma}(u,v)].$$

\begin{predl}
The element $r_{\sigma}(v,u)$ has the following properties:\\
1) $r_{\sigma}(u,v)= -r_{\sigma}^{21}(u,v)$;\\
2)$[r_{\sigma}^{12}(u,v), r_{\sigma}^{13}(u,w)] + [r_{\sigma}^{12}(u,v), r_{\sigma}^{23}(v,w)]+
[r_{\sigma}^{13}(u,w), r_{\sigma}^{23}(v,w)] = 0.$\\
\end{predl}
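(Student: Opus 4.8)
The plan is to deduce part 1) by a short direct computation from the closed expression for $r_\sigma$ obtained just above the statement, and to obtain part 2) either as an instance of the general fact that the canonical element of a Manin triple solves the classical Yang--Baxter equation, or by an explicit check reducing to partial fractions.

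For part 1) I would start from
\[
r_\sigma(u,v)=\tfrac12\,\frac{\mathfrak t}{u-v}+\tfrac12\,\frac{(\sigma\otimes\mathrm{id})\mathfrak t}{u+v},\qquad \mathfrak t=\mathfrak t_0+\mathfrak t_1.
\]
Two facts are needed. First, $\mathfrak t$ is the symmetric invariant Casimir of the (even) form $(\cdot,\cdot)$, hence is fixed by the graded transposition $\tau$ of the two tensor legs: $\tau(\mathfrak t)=\mathfrak t$. Second, because the queer involution $\sigma$ anti-preserves the invariant form — one checks $\mathrm{str}\circ\sigma=-\mathrm{str}$, whence $(\sigma x,\sigma y)=-(x,y)$ — one has $(\sigma\otimes\sigma)\mathfrak t=-\mathfrak t$; combined with $\tau\circ(\sigma\otimes\mathrm{id})=(\mathrm{id}\otimes\sigma)\circ\tau$ and $\sigma^2=\mathrm{id}$ this gives $\tau\bigl((\sigma\otimes\mathrm{id})\mathfrak t\bigr)=-(\sigma\otimes\mathrm{id})\mathfrak t$. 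Since $u-v$ changes sign while $u+v$ is unchanged under $u\leftrightarrow v$, applying to $r_\sigma$ the graded flip of legs composed with the swap $u\leftrightarrow v$ — which is what $r_\sigma^{21}(u,v)$ denotes — yields exactly $-r_\sigma(u,v)$. (This is the ``regular part'' of the universal identity $r+r^{21}=\Omega$, the Casimir $\Omega$ of the residue form being supported on the diagonal $u=\pm v$.)

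For part 2) the conceptual argument is that $r=r_\sigma$ is, by construction, the canonical element $\sum_{i,k}e_{i,k}\otimes e^{i,k}$ of the Manin triple $(\mathfrak P,\mathfrak P_1,\mathfrak P_2)$ with its non-degenerate invariant residue pairing, and the classical Yang--Baxter equation for the canonical element is a general theorem about double constructions of Lie (super)bialgebras whose proof is formal and carries over without change. If one prefers an explicit verification, one substitutes the closed form into the left-hand side of 2), expands each of $r_\sigma^{12}(u,v)$, $r_\sigma^{13}(u,w)$, $r_\sigma^{23}(v,w)$ into its two summands with denominators taken respectively from $\{u\mp v\}$, $\{u\mp w\}$, $\{v\mp w\}$, and sorts the resulting twelve commutators by their scalar coefficients. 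Within each group the Lie-theoretic factor collapses — via ad-invariance of $\mathfrak t$ and of $(\sigma\otimes\mathrm{id})\mathfrak t$ together with the behaviour of $\mathfrak t$ under $(\sigma\otimes\sigma)$ — to a single (possibly $\sigma$-twisted) invariant three-tensor, and the three scalar coefficients cancel by the partial-fraction identity $\frac{1}{(a-b)(a-c)}+\frac{1}{(b-a)(b-c)}+\frac{1}{(c-a)(c-b)}=0$ applied with $a,b,c\in\{\pm u,\pm v,\pm w\}$, exactly as in the untwisted Yang $r$-matrix. A third, potentially slicker route would be to exploit the isomorphism $q_n\cong\mathfrak{sl}_n(C)$ with $C=\mathbb C\langle 1,c\rangle$, $c^2=1$, $c$ odd, under which $r_\sigma$ should become the ordinary rational $r$-matrix of $\mathfrak{sl}_n$ ``with Clifford coefficients'', so that the sign in $(\sigma\otimes\sigma)\mathfrak t=-\mathfrak t$ is accounted for by the oddness of $c$.

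The one genuine difficulty is the bookkeeping in the explicit proof: since $\mathfrak t_0$ and $\mathfrak t_1$ have tensor legs of odd parity, permuting the three super-tensor legs produces Koszul signs which interact with the signs coming from moving $\sigma$ past the tensor factors, and one must check that in every group these signs line up so that the partial-fraction cancellation actually goes through; this is precisely where the super-structure is essential, since the same construction would fail for an ordinary Lie algebra carrying an involution that anti-preserves its invariant form. In the conceptual proof the only point deserving care is that $r$ lives in a completed tensor product $\mathfrak P_1\,\widehat{\otimes}\,\mathfrak P_2$, so one must note that the series occurring in the double-construction argument converge in the natural topology and that the ``constant'' term $\Omega$ contributes only on the diagonal, which is why the honest rational identities of 1) and 2) hold away from the poles.
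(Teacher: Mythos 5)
Your part 1) follows essentially the paper's own route: the paper records the same symmetry in the form $\mathfrak{t}_0^{21}=\mathfrak{t}_1$, $\mathfrak{t}_1^{21}=\mathfrak{t}_0$ and substitutes into the closed formula, which is exactly your pair of identities $\tau(\mathfrak{t})=\mathfrak{t}$ and $\tau\bigl((\sigma\otimes\mathrm{id})\mathfrak{t}\bigr)=-(\sigma\otimes\mathrm{id})\mathfrak{t}$ written componentwise; your derivation of the second identity from $\mathrm{str}\circ\sigma=-\mathrm{str}$, i.e.\ $(\sigma\otimes\sigma)\mathfrak{t}=-\mathfrak{t}$, correctly supplies a justification the paper only asserts.

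For part 2) your proposal is correct but takes a partly different route. The paper, like you, first appeals to the fact that the canonical element of the Manin triple satisfies $\langle r,r\rangle=0$, but its actual worked argument is neither of your two options: it is an averaging (twisting) argument — apply $\mathrm{id}\otimes\sigma^{k}\otimes\sigma^{l}$, $k,l\in\mathbb{Z}_2$, to the classical Yang--Baxter equation for the untwisted Yang matrix $\mathfrak{t}/(u-v)$, substitute $(-1)^{k}v$ and $(-1)^{l}w$ for $v$ and $w$, and sum over $k,l$, using $(\sigma\otimes\sigma)\mathfrak{t}=-\mathfrak{t}$ (the paper's ``$s=-\mathrm{id}$'' is evidently a slip for $\sigma$). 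This buys precisely what each of your alternatives lacks: unlike the Manin-triple argument it requires no discussion of completions or of the diagonal term, and unlike your direct expansion it avoids the twelve-commutator bookkeeping, since all $\sigma$- and Koszul signs enter only through the single identity $(\sigma\otimes\sigma)\mathfrak{t}=-\mathfrak{t}$ together with $\sigma$ being an automorphism. Your partial-fraction check is viable but is left at the level of a plan (you flag the sign bookkeeping yourself), and your closing remark that the construction ``would fail'' for an ordinary Lie algebra with an involution anti-preserving the form is not needed and is doubtful: the averaging argument uses only that $\sigma$ is an involutive automorphism and the sign of $(\sigma\otimes\sigma)\mathfrak{t}$, not the super structure as such.
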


\begin{proof}
Let's note that  $\mathfrak{t}_0^{21} = \mathfrak{t}_1, \mathfrak{t}_1^{21} = \mathfrak{t}_0$. Then \\
$r_{\sigma}^{21}(v,u)=
\frac{1}{2} \dfrac{\mathfrak{t}_1 + \mathfrak{t}_0}{v-u} + \frac{1}{2} \dfrac{\mathfrak{t}_1 - \mathfrak{t}_0}{v+u} = -r_{\sigma}(u,v)$.
Item 2) follows from the fact that the canonical element $r$ satisfies the classical Yang-Baxter equation (CYBE) $\left\langle r,r \right\rangle = 0$.
Actually, the function   $r(u,v)= \dfrac{\mathfrak{t}_0 + \mathfrak{t}_1}{u-v}= \dfrac{\mathfrak{t}}{u-v}$ satisfies a classical Yang-Baxter equation  (CYBE) (see \cite{Dr}):
\begin{equation} \label{equation21}
[r^{12}(u,v), r^{13}(u,w)] + [r^{12}(u,v), r^{23}(v,w)]+[r^{13}(u,w), r^{23}(v,w)] = 0
\end{equation}
Let  $s= - id$.
Apply the operator $id\otimes s^k \otimes s^l (k, l \in \mathbb{Z}_2)$  to the left side of equality (\ref{equation21}) and substitute $(-1)^k \cdot v, (-1)^l \cdot w $ instead of $v,  w$, respectively. Summing over $k, l \in \mathbb{Z}_2$ and using the fact that $(s \otimes s) (\mathfrak{t}) = - \mathfrak{t}$ we get that the left-hand side satisfies to item 2 of this proposition.
\end{proof}

%$\square$

We describe the twisted superalgebra $\mathfrak{g}[u]^{\tilde{\sigma}}$ in terms of generators and defining relations. We introduce generators by the following formulas:
\begin{equation}
H_{i,k,r} = h_{i,k,\bar{r}}, \quad E_{i,k,r} = e_{i,k,\bar{r}}, \quad F_{i,k,r} = f_{i,k,\bar{r}}, \quad
\end{equation}
where $i \in I={1,2,\ldots, n}$, $k = {0, 1}$, $r \in \mathbb{Z}_0$,  $\bar{r} \in \mathbb{Z}_2, \bar{r} \equiv r \ \mod \ 2$.

\begin{predl}
The Lie superalgebra $\mathfrak{g}[u]^{\tilde{\sigma}}$ is generated by the generators $H_{i,k,r}, E_{i,k,r},  F_{i,k,r}$, where $i \in I = {1,2, \ ldots, n} $, $k \in \{0, 1\}$, $r \in \mathbb{Z}_0$, which satisfy the following system of defining relations:
\begin{eqnarray}
&[H_{i,0,r}, H_{j,0,s}] = 0; \quad [H_{i,0,r}, H_{j,1,s}] = 0; \quad \\
&[H_{i,1, 2k}, H_{j,1,2s}] = 2\delta_{i,j} H_{i,0, 2(k+s)}; \quad [H_{i,0,2k}, H_{j,1,2s+1}] = 0; \quad\\
& [H_{i,0,2k+1}, H_{j,1,2s+1}] = 2\delta_{i,j} h_{i,1,2(k+s+1)}; \quad [H_{i,1,2k}, H_{j,1,2s+1}] = 0; \quad\\
&[H_{i,0,2s}, E_{j,k,r}] = (\delta_{i,j} - \delta_{i-1,j}) E_{j,k,2s+r}; \quad \\
&[H_{i,0,2s}, F_{j,k,r}] = -(\delta_{i,j} - \delta_{i-1,j}) F_{j,k,2s+r}, \quad  k,r \in \mathbb{Z}_{0}; \quad\\
&[H_{i,0,2s+1}, E_{j,1,r}] = (\delta_{i,j} + \delta_{i-1,j}) E_{j,1, 2s+r+1}; \quad\\
&[H_{i,0,2s+1}, F_{j,1,r}] = (\delta_{i,j} + \delta_{i-1,j}) F_{j,1,2s+r+1}, \quad k,r \in \mathbb{Z}_{0}; \quad\\
&[H_{i,1,2s}, E_{j,0,2r}] = (\delta_{i,j} - \delta_{i-1,j}) E_{j,1,2(s+r)}; \quad [H_{i,1,2s}, F_{j,0,2r}] = -(\delta_{i,j} - \delta_{i-1,j}) F_{j,1,2(s+r)}; \qquad\\
&[H_{i,1,2s}, E_{j,0,2r+1}] = -(\delta_{i,j} + \delta_{i-1,j}) E_{j,1,2(s+r)+1}; \quad [H_{i,1,2s}, F_{j,0,2r+1}] = -(\delta_{i,j} + \delta_{i-1,j}) F_{j,1,2(s+r)+1}; \qquad
\end{eqnarray}
\begin{eqnarray}
&[H_{i,1,2s}, E_{j,1,2r}] = (\delta_{i,j} + \delta_{i-1,j}) E_{j,0,2(s+r)}; \quad [H_{i,1,2s}, F_{j,1,2r}] = (\delta_{i,j} + \delta_{i-1,j}) F_{j,0,2(s+r)}; \qquad\\
&[H_{i,1,2s}, E_{j,1,2r+1}] = -(\delta_{i,j} - \delta_{i-1,j}) E_{j,0,2(s+r)+1}; \quad\\
&[H_{i,1,2s}, F_{j,1,2r+1}] = (\delta_{i,j} - \delta_{i-1,j}) F_{j,0,2(r+s)+1}; \quad\\
&[H_{i,1,2s+1}, E_{j,1,2r}] = (\delta_{i,j} + \delta_{i-1,j}) E_{j,0,2(r+s)+1}; \quad\\
&[H_{i,1,2s+1}, F_{j,1,2r}] = (\delta_{i,j} + \delta_{i-1,j}) F_{j,0,2(r+s)+1}; \quad\\
&[H_{i,1,2s+1}, E_{j,1,2r+1}] = -(\delta_{i,j} + \delta_{i-1,j}) E_{j,0,2(r+s+1)}; \quad\\
&[H_{i,1,2s+1}, F_{j,1,2r+1}] = -(\delta_{i,j} + \delta_{i-1,j}) F_{j,0,2(r+s+1)}; \quad\\
&[E_{i,0,r}, F_{j,0,s}] = \delta_{i,j}(H_{i,0, r+s} - H_{i+1,0, r+s}), \quad\\
&[E_{i,i,2s}, F_{j,i+1,2r}] = \delta_{i,j}(H_{i,1, 2(s+r)} - H_{i+1,1, 2(s+r)}), \quad\\
&[E_{i,1,2s}, F_{j,1,2r}] = \delta_{i,j}(H_{i,0, 2(r+s} + H_{i+1,0, 2(r+s)}), \quad\\
&[E_{i,k,2s}, F_{j,k+1,2r}] = \delta_{i,j}(H_{i,1, 2(k+s)} - H_{i+1,1, 2(k+s)}), \quad\\
&[E_{i,0,2s+1}, F_{j,1,2r+1}] = \delta_{i,j}(H_{i,1, 2(s+r+1)} + H_{i+1,1, 2(s+r+1)}), \quad\\
& [E_{i,1,2s+1}, F_{j,1,2r+1}] = -\delta_{i,j}(H_{i,0, 2(s+r+1)} + H_{i+1,0, 2(s+r+1)}), \quad\\
&-[E_{i,1,2s}, F_{j,1,2r+1}] = [E_{i,1,2s+1}, F_{j,1,2r}] = \delta_{i,j}(H_{i,0,2(s+r)+1} - h_{i+1,0,2(s+r)+1}). \quad
\end{eqnarray}
\end{predl}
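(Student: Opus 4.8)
The plan is to identify the abstractly presented Lie superalgebra with $\mathfrak{g}[u]^{\tilde{\sigma}}$ through the tautological assignment, prove surjectivity by a short generation argument, and reduce injectivity to a degree-by-degree normal-form computation. Write $\mathfrak{a}$ for the Lie superalgebra on the generators $H_{i,k,r},E_{i,k,r},F_{i,k,r}$ (with the stated parities) modulo the listed relations. Every relation is homogeneous for the grading $\deg H_{i,k,r}=\deg E_{i,k,r}=\deg F_{i,k,r}=r$, so $\mathfrak{a}=\bigoplus_{r\ge 0}\mathfrak{a}_{r}$. I would first record that
$$\varphi\colon\mathfrak{a}\longrightarrow\mathfrak{g}[u]^{\tilde{\sigma}},\qquad H_{i,k,r}\mapsto h_{i,k,\bar r}\,u^{r},\quad E_{i,k,r}\mapsto e_{i,k,\bar r}\,u^{r},\quad F_{i,k,r}\mapsto f_{i,k,\bar r}\,u^{r},$$
is a well-defined graded homomorphism: the images lie in $\mathfrak{g}[u]^{\tilde{\sigma}}$ because $h_{i,k,1},e_{i,k,1},f_{i,k,1}$ are $(-1)$-eigenvectors of $\sigma$ while $h_{i,k,0},e_{i,k,0},f_{i,k,0}$ are $(+1)$-eigenvectors and $\bar r\equiv r\pmod 2$; and each listed relation is the corresponding degree-zero relation in $\mathfrak{g}$ (whose validity was noted in the discussion preceding the statement) multiplied through by the appropriate power of $u$ and read off via $\tilde{\sigma}(xu^{j})=\sigma(x)(-u)^{j}$, hence holds already in $\mathfrak{g}\otimes\mathbb{C}[u]$. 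By construction $\varphi(\mathfrak{a}_{2k})\subseteq\mathfrak{g}^{0}u^{2k}$ and $\varphi(\mathfrak{a}_{2k+1})\subseteq\mathfrak{g}^{1}u^{2k+1}$.

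For surjectivity: the degree-zero generators are exactly the generators in the presentation of $sq_{n}=\mathfrak{g}^{0}$ recalled above, so $\mathfrak{g}^{0}u^{0}\subseteq\operatorname{Im}\varphi$; and the degree-one generators $H_{i,1,1},E_{i,1,1},F_{i,1,1}$ generate $\mathfrak{g}^{1}$ as a $\mathfrak{g}^{0}$-module, so $\mathfrak{g}^{1}u\subseteq\operatorname{Im}\varphi$. Using $[\mathfrak{g}^{0},\mathfrak{g}^{1}]=\mathfrak{g}^{1}$ and $[\mathfrak{g}^{1},\mathfrak{g}^{1}]=\mathfrak{g}^{0}$ together with the decomposition \eqref{equation31}, one gets inductively $\mathfrak{g}^{1}u^{2k+1}=[\mathfrak{g}^{0}u^{2k},\mathfrak{g}^{1}u]$ and $\mathfrak{g}^{0}u^{2k+2}=[\mathfrak{g}^{1}u^{2k+1},\mathfrak{g}^{1}u]$, so $\varphi$ is onto. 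Consequently it suffices to show $\dim\mathfrak{a}_{r}\le\dim\mathfrak{g}^{\bar r}$ for every $r\ge 0$; together with the surjections $\mathfrak{a}_{r}\twoheadrightarrow\mathfrak{g}^{\bar r}u^{r}$ this forces $\varphi$ to be an isomorphism in each degree.

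For $r=0$ this is the earlier presentation of $sq_{n}$: $\mathfrak{a}_{0}$ is a quotient of $sq_{n}$ by those relations (the Serre relations \eqref{12.7} among them), hence $\dim\mathfrak{a}_{0}\le\dim\mathfrak{g}^{0}$, and with surjectivity $\varphi$ restricts to an isomorphism $\mathfrak{a}_{0}\xrightarrow{\sim}\mathfrak{g}^{0}$. Next I would note that the degree-raising relations — those expressing $E_{j,0,r+2}=\pm[H_{j,0,2},E_{j,0,r}]$, $E_{j,1,r+2}=\pm[H_{j,0,1},E_{j,1,r}]$, their $F$- and $H$-analogues, and the recovery of $H_{i,0,2}$ and $H_{i,1,2}$ from brackets $[E_{\cdot,\cdot,1},F_{\cdot,\cdot,1}]$ — show that $\mathfrak{a}$ is generated in degrees $\le 1$; hence $\mathfrak{a}_{r}=[\mathfrak{a}_{1},\mathfrak{a}_{r-1}]$ for $r\ge 2$ by the Jacobi identity, and $\mathfrak{a}_{1}$ is generated as an $\mathfrak{a}_{0}$-module by $H_{i,1,1},E_{i,1,1},F_{i,1,1}$. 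Proceeding by induction on $r$, one has a surjection $\mathfrak{a}_{r}\to\mathfrak{g}^{\bar r}u^{r}$ onto a finite-dimensional space, and the remaining defining relations — the triangular ones of the form $[E,F]=\delta_{i,j}(H_{i}-H_{i+1})$, used to move the $E$'s past the $F$'s at the cost of Cartan elements, the relations $[H_{i,1,\cdot},E_{j,0,\cdot}]=\pm E_{j,1,\cdot}$ and analogues trading the $k=0$ and $k=1$ components, and the Serre-type relations inherited from \eqref{12.7} — allow one to straighten an arbitrary iterated bracket of $u$-degree $r$ into a normal form indexed by a fixed PBW basis of $\mathfrak{g}^{\bar r}u^{r}$. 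This yields $\dim\mathfrak{a}_{r}\le\dim\mathfrak{g}^{\bar r}$, so $\varphi$ is an isomorphism onto $\mathfrak{g}^{\bar r}u^{r}$, and summing over $r$ completes the proof.

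The step I expect to be the genuine obstacle is this last straightening: one has to verify that the listed relations are \emph{complete}, i.e. sufficient to reduce every monomial in the generators to the chosen normal form without forcing any new relation in higher $u$-degree. Concretely this is the classical argument that the Serre relations cut the free Lie superalgebra on the Chevalley-type generators down to $sq_{n}$, but now carried out in each $u$-degree separately and with the extra bookkeeping imposed by the index $k\in\{0,1\}$ and the parity constraint $\bar r\equiv r\pmod 2$. Two points require special care: checking that the degree-raising relations are compatible with the Serre relations, so that the weight spaces of $\mathfrak{a}$ in degree $r$ are no larger than those of $\mathfrak{g}^{\bar r}$; and handling the odd generators, whose squares are governed by the brackets rather than vanishing, so that they do not enlarge those weight spaces.
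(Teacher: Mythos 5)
You have set the argument up in the standard way -- the tautological homomorphism $\varphi$ from the abstractly presented algebra $\mathfrak{a}$, surjectivity from generation in $u$-degrees $0$ and $1$, and injectivity reduced to the graded dimension bound $\dim\mathfrak{a}_r\le\dim\mathfrak{g}^{\bar r}$ -- and for comparison I note that the paper itself offers no proof of this proposition at all: the relations are presented as verified by direct computation (as with the finite-dimensional $\mathfrak{gl}(n,n)$ relations just before), and completeness of the presentation is not argued. So there is no published argument to measure yours against, and your outline is the natural route one would take.

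That said, as it stands your proposal has a genuine gap, and it is exactly the step you yourself flag as ``the genuine obstacle.'' Well-definedness of $\varphi$ and surjectivity are essentially routine (each relation is a $\mathbb{Z}_2$-graded relation of $\mathfrak{g}$ tensored with a power of $u$, and the bracket computations $[\mathfrak{g}^0u^{2k},\mathfrak{g}^1u]$, $[\mathfrak{g}^1u^{2k+1},\mathfrak{g}^1u]$ give the higher degrees). The entire mathematical content of the proposition is the converse direction: that the listed relations are \emph{defining}, i.e.\ that every element of $\mathfrak{a}_r$ can be straightened to a span of at most $\dim\mathfrak{g}^{\bar r}$ normal-form elements. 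You assert this straightening but do not perform it; ``the classical Serre-relation argument carried out in each $u$-degree with extra bookkeeping'' is a program, not a proof, and it is precisely here that hidden relations in higher $u$-degree could in principle appear (or fail to be derivable), especially because of the odd generators whose squares are constrained by brackets and because of the interplay between the index $k\in\{0,1\}$ and the parity of $r$. Two auxiliary claims you rely on also need actual verification rather than assertion: that $\mathfrak{a}$ is generated in degrees $\le 1$ (you must exhibit each $H_{i,k,2}$, $E_{j,k,r+2}$, $F_{j,k,r+2}$ from the stated relations -- this works, e.g.\ by combining the sum and difference relations $[E_{i,1,1},F_{i,1,1}]$ and $[E_{i,0,1},F_{i,0,1}]$, but it must be written out), and that the surjectivity identities $[\mathfrak{g}^0u^{2k},\mathfrak{g}^1u]=\mathfrak{g}^1u^{2k+1}$, $[\mathfrak{g}^1u^{2k+1},\mathfrak{g}^1u]=\mathfrak{g}^0u^{2k+2}$ hold for the algebra actually being presented; here one must be careful about central elements, which is exactly why the paper works with $\mathfrak{g}[u]^{\tilde\sigma}=L'_{tw}q_n$ rather than $L_{tw}q_n$. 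Until the dimension-bound/straightening step is carried out (for instance by constructing, by induction on $r$, root-vector elements spanning $\mathfrak{a}_r$ and checking their number against the decomposition (\ref{equation31})), the proposal remains an outline rather than a proof.
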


\section{Quantization. Definition of the Drinfeld Yangian of the queer Lie superalgebra}

\subsection{Quantization}

The general definition  of the twisted Yangian is considered in \cite{St3}. The question of the connection between this construction and the construction of the Yangian of the queer Lie superalgebra is also considered there. In the case where the twisted superalgebras of currents is a Lie bisuperelgebra, we obtain the following definition of quantization, which is a particular case of a more general construction from the paper \cite{St3}.
\begin{opr}
By the quantization of the Lie bisuperalgebra $(B, \delta)$ we mean a Hopf superalgebra $A = A_{\hbar}$ such that \\
1) $A_{\hbar} / \hbar \cdot A_{\hbar} \cong UB$, as Hopf superalgebra ($UB$ is a universal enveloping siperalgebra of Lie superalgebra  $B$);\\
2) $A \cong UB[[\hbar ]]$ are isomorphic as topological  $\mathbb{C}[[\hbar ]]$-modules;\\
3)$\delta (x) = \hbar^{-1} \cdot (\Delta(a) - \Delta^{op}(a))\bmod
\hbar $, where  $a$ be the inverse image of $x$ in $A$, and $\Delta^{op} = \tau \circ \Delta $, $\Delta$ is the comultiplication in $A$, and $\tau(x \otimes y) = (-1)^{p(x)p(y)} y \otimes x$ is the permutation of the tensor factors.
\end{opr}

We now describe the quantization of the Lie bisuperelgebra $(\mathfrak{g}[u]^{\tilde{\sigma}}, \delta)$. We constrain the quantization by the following additional conditions.\\

4) $A$ be a graded superalgebra over graded ring  $\mathbb{C}[[\hbar]]$, $\deg \hbar = 1$;\\
5) grading of  $A$ and grading of $\mathfrak{g}[u]^{\tilde{\sigma}}$ (with degrees u) induce the same grading of $U(\mathfrak{g}[u]^{\tilde{\sigma}})$, that is  $A_\hbar / \hbar \cdot A_\hbar \cong U(\mathfrak{g}[u]^{\tilde {\sigma }})$, as graded superalgebras over  $\mathbb{C}$.

\begin{predl}
\label{proposition1}
Let $\mathfrak{A}$ be a Lie superalgebra with an invariant bilinear form $(\cdot, \cdot)$; $\{e_i \}, \{e ^ i\} $ are dual bases relative to this bilinear form. Then for each element $g \in \mathfrak{A}$ we have the equality:
\begin{equation}
[g \otimes 1, \sum e_i \otimes e^i] = -[1 \otimes g, \sum e_i \otimes e^i]
\end{equation}
\end{predl}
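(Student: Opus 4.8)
The plan is to exploit the invariance of the bilinear form $(\cdot,\cdot)$, which is equivalent to the statement that the Casimir-type tensor $\mathfrak{t} = \sum_i e_i \otimes e^i$ is $\mathfrak{A}$-invariant under the adjoint action on $\mathfrak{A}\otimes\mathfrak{A}$. Concretely, $\mathrm{ad}_g(\mathfrak{t}) = [g\otimes 1 + 1\otimes g,\ \mathfrak{t}] = 0$, and rearranging this identity gives exactly $[g\otimes 1,\mathfrak{t}] = -[1\otimes g,\mathfrak{t}]$. So the real content is to verify that $\mathrm{ad}_g(\mathfrak{t})=0$, which follows from invariance of the form together with the fact that $\{e_i\}$ and $\{e^i\}$ are dual bases.

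First I would write out the action explicitly in coordinates. Expanding $[g\otimes 1,\mathfrak{t}] + [1\otimes g,\mathfrak{t}] = \sum_i [g,e_i]\otimes e^i + \sum_i (-1)^{p(g)p(e_i)} e_i \otimes [g,e^i]$, taking care of the Koszul sign that arises when $g$ (in the second tensor slot of $1\otimes g$) is moved past $e_i$ in the superbracket. Next I would express the structure constants of these brackets in the dual bases: write $[g,e_i] = \sum_j c_{ij} e_j$ and $[g,e^i] = \sum_j d^{ij} e_j$ (or, more usefully, expand $[g,e^i]$ again in the $\{e^j\}$ basis or pair it against $e_j$). The key computation is then to relate the coefficients $c_{ij}$ and $d^{ij}$ using invariance of the form: $([g,e_i],e^j) = -(-1)^{p(g)p(e_i)}(e_i,[g,e^j])$, which after accounting for the parity bookkeeping (only parity-homogeneous $g$ contribute, and both sides vanish unless the parities match up) shows the two sums in $\mathrm{ad}_g(\mathfrak{t})$ cancel termwise.

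The main obstacle, and the only place requiring genuine care, is the sign bookkeeping in the super setting: one must track the Koszul signs both in the definition of the bracket on the tensor square (the action of $1\otimes g$ carries $(-1)^{p(g)p(e_i)}$), in the super-invariance condition for the form ($([x,y],z) = (x,[y,z])$ up to a sign), and in the relation between the "left dual" and "right dual" bases. It suffices to prove the identity for homogeneous $g$ and then extend by linearity; for homogeneous $g$ one checks that the parity constraints force $(g,e_i)$-type pairings to be nonzero only in compatible degrees, so the signs that appear in the two sums are identical and the terms cancel. Once the signs are pinned down, the identity is immediate; alternatively, one can avoid explicit structure constants entirely by invoking $\mathrm{ad}$-invariance of $\mathfrak{t}$ as a known consequence of nondegeneracy and invariance of $(\cdot,\cdot)$, in which case the proposition is just an algebraic rearrangement.
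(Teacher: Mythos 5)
Your proposal is correct and takes essentially the same route as the paper: both arguments rest on the super-invariance identity $([g,e_i],e^j)=-(-1)^{p(g)p(e_i)}(e_i,[g,e^j])$ applied to the dual bases, the only cosmetic difference being that you verify the cancellation termwise via structure constants, whereas the paper passes from equality of pairings to equality of tensors through the isomorphism $V\otimes V\cong V\otimes V^{*}$ induced by the form. Your reformulation of the claim as $\mathrm{ad}$-invariance of $\mathfrak{t}=\sum_i e_i\otimes e^i$ is just an algebraic rearrangement of the stated identity, so no additional ideas are required.
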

%{\it Доказательство.}
\begin{proof}
The following equality follows from the definition of an invariant bilinear form: \\
\( ([g, a], b)= -(-1)^{deg(g)deg(a)}([a,g],b)= -(-1)^{deg(g)deg(a)}(a,[g,b])\) for $\forall a, b \in \mathfrak{A}$.  Therefore,
\begin{equation}\label{equation32}
([g, e_i], e^i)= -(-1)^{deg(g)deg(a)}([e_i, g],e^i)= -(-1)^{deg(g)deg(a)}(e_i,[g,e^i])
\end{equation}

A scalar product on a vector space $V$ defines an isomorphism between $V$ and $V^*$, and hence between $V \otimes V$ and $V \otimes V^*$. Summarizing the equality \ref{equation32} for $i$, we obtain

\[ \sum_i ([g, e_i], e^i)= \sum_i -(-1)^{deg(g)deg(a)}(e_i,[g,e^i]).\]

We note that the equality of the values of the functionals on the elements of the basis implies the equality of the functionals themselves and, in view of the above isomorphism, we obtain the following equality:

\[ \sum_i [g, e_i]\otimes e^i= \sum_i -(-1)^{deg(g)deg(a)}e_i \otimes [g,e^i] \]

or  \([g \otimes 1, \sum e_i \otimes e^i] = -[1 \otimes g, \sum e_i \otimes e^i].\)
Proposition is proved.
\end{proof}

Let $h^1_{i,j}$, $x^{\pm,1}_{i,j}$ are the elements of $\mathfrak{gl}(n,n)_1$ dual to $h_{i,j}, x^{\pm}_{i,j} \in q_n$, respectively.

\begin{predl}
Let \(\mathfrak{A}= \mathfrak{A}^0 \oplus \mathfrak{A}^1\) be a Lie superalgebra with nondegenerate invariant scalar product, such that  \(\mathfrak{A}^0, \mathfrak{A}^1\) are isotropic subspaces, and \(\mathfrak{A}^0, \mathfrak{A}^1\) are nondegenerate paired, \(\mathfrak{A}^0\) be a subsuperalgebra Lie, \(\mathfrak{A}^1\) be a module over \(\mathfrak{A}^0\). Let also   \(\{e_i\}, \{e^i\} \) be dual bases in \(\mathfrak{A}^0, \mathfrak{A}^1\), respectively,  \(\mathfrak{t}_0 = \sum_i e_i \otimes e^i, \mathfrak{t}_1 = \sum_i e^i \otimes e_i\). Then for the all   \( a \in \mathfrak{A}^0, b \in \mathfrak{A}^1\) we have the following equalities:
\[ [a\otimes 1, \mathfrak{t}_0] = - [1 \otimes a, \mathfrak{t}_0]; \quad
[a\otimes 1, \mathfrak{t}_1] = - [1 \otimes a, \mathfrak{t}_1]; \]
\[ [b\otimes 1, \mathfrak{t}_0] = - [1 \otimes b, \mathfrak{t}_1]; \quad
[b\otimes 1, \mathfrak{t}_1] = - [1 \otimes b, \mathfrak{t}_0]. \]
\end{predl}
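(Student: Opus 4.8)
The plan is to imitate the proof of Proposition \ref{proposition1}, splitting into four cases according to which $\sigma$-homogeneous subspace the brackets $[a,e_i]$, $[a,e^i]$, $[b,e_i]$, $[b,e^i]$ fall into. Two facts are used throughout. First, the $\mathbb{Z}_2$-grading $\mathfrak{A}=\mathfrak{A}^0\oplus\mathfrak{A}^1$ is compatible with the bracket, i.e. $[\mathfrak{A}^0,\mathfrak{A}^0]\subseteq\mathfrak{A}^0$, $[\mathfrak{A}^0,\mathfrak{A}^1]\subseteq\mathfrak{A}^1$, $[\mathfrak{A}^1,\mathfrak{A}^1]\subseteq\mathfrak{A}^0$: the first inclusion is the hypothesis that $\mathfrak{A}^0$ is a subsuperalgebra, the second that $\mathfrak{A}^1$ is a module over it, and the third then follows (it holds for the $\sigma$-eigenspace decomposition of any Lie superalgebra equipped with an involution). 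Second, invariance of the form, $([x,y],z)=-(-1)^{p(x)p(y)}(y,[x,z])$, which converts the coordinates of $[g,e_i]$ (respectively $[g,e^i]$) in the appropriate dual basis into values of the form, with a sign flip, exactly as in the proof of Proposition \ref{proposition1}.

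For $a\in\mathfrak{A}^0$: since $\mathrm{ad}(a)$ preserves the $\sigma$-grading, $[a,e_i]\in\mathfrak{A}^0=\langle e_j\rangle$ and $[a,e^i]\in\mathfrak{A}^1=\langle e^j\rangle$, so both $[a\otimes 1,\mathfrak{t}_0]$ and $[1\otimes a,\mathfrak{t}_0]$ lie in $\mathfrak{A}^0\otimes\mathfrak{A}^1$; expanding each in the product basis via $(e_i,e^j)=\delta_i^j$ and applying invariance slot by slot gives $[a\otimes 1,\mathfrak{t}_0]=-[1\otimes a,\mathfrak{t}_0]$, which is precisely the computation of Proposition \ref{proposition1}. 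Repeating it with $e_i$ and $e^i$ interchanged gives $[a\otimes 1,\mathfrak{t}_1]=-[1\otimes a,\mathfrak{t}_1]$.

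For $b\in\mathfrak{A}^1$: now $\mathrm{ad}(b)$ shifts the $\sigma$-grading, so $[b,e_i]\in\mathfrak{A}^1=\langle e^j\rangle$ while $[b,e^i]\in\mathfrak{A}^0=\langle e_j\rangle$, and this is exactly what produces the swap $\mathfrak{t}_0\leftrightarrow\mathfrak{t}_1$ in the last two identities. Thus $[b\otimes 1,\mathfrak{t}_0]=\sum_i[b,e_i]\otimes e^i$ lies in $\mathfrak{A}^1\otimes\mathfrak{A}^1$, as does $[1\otimes b,\mathfrak{t}_1]=\pm\sum_i e^i\otimes[b,e_i]$; expanding $[b,e_i]$ in $\{e^j\}$, rewriting its coordinates through the form and applying invariance yields $[b\otimes 1,\mathfrak{t}_0]=-[1\otimes b,\mathfrak{t}_1]$, and the analogous $\mathfrak{A}^0\otimes\mathfrak{A}^0$ computation yields $[b\otimes 1,\mathfrak{t}_1]=-[1\otimes b,\mathfrak{t}_0]$. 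All four identities can in fact be packaged at once: apply Proposition \ref{proposition1} to $\mathfrak{A}$ with its full nondegenerate invariant form, whose Casimir is $\mathfrak{t}_0+\mathfrak{t}_1$ (up to the parity-dependent normalization of the dual bases), obtaining $[g\otimes 1,\mathfrak{t}_0+\mathfrak{t}_1]=-[1\otimes g,\mathfrak{t}_0+\mathfrak{t}_1]$, and decompose this identity according to the $\mathbb{Z}_2\times\mathbb{Z}_2$ bidegree on $\mathfrak{A}\otimes\mathfrak{A}$ induced by the $\sigma$-grading: for $g=a\in\mathfrak{A}^0$ only the bidegrees $(0,1)$ and $(1,0)$ occur, giving the first two equalities, while for $g=b\in\mathfrak{A}^1$ only $(1,1)$ and $(0,0)$ occur, giving the last two.

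The routine but genuinely error-prone step — and the only real obstacle — is the sign accounting: one must check that the sign $(-1)^{p(x)p(y)}$ in the invariance identity, the Koszul sign appearing in $[1\otimes g,\mathfrak{t}_j]$, and the parity-dependent sign relating $(e^i,e_j)$ to $(e_j,e^i)$ in the chosen dual-basis convention conspire to leave a uniform coefficient $-1$ in each of the four cases. I would settle this by taking the $e_i$ super-homogeneous — possible since $\sigma$ is an even automorphism, so $\mathfrak{A}^0$ and $\mathfrak{A}^1$ inherit the super-grading — and tracking one representative term of each identity.
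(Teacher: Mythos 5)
Your overall skeleton --- get everything from invariance of the form (Proposition \ref{proposition1} applied to the invariant element of the full form) and then split according to the $\mathfrak{A}^{0}\oplus\mathfrak{A}^{1}$ bidegree of $\mathfrak{A}\otimes\mathfrak{A}$ --- is exactly what the paper's one-sentence proof (``based on the properties of an invariant bilinear form'') points to, so in approach you and the paper agree. One small repair: the inclusion $[\mathfrak{A}^{1},\mathfrak{A}^{1}]\subseteq\mathfrak{A}^{0}$ should not be attributed to a $\sigma$-eigenspace decomposition, since no involution appears among the hypotheses; it does, however, follow from what is assumed: for $b_{1},b_{2}\in\mathfrak{A}^{1}$, $a\in\mathfrak{A}^{0}$, invariance gives $([b_{1},b_{2}],a)=\pm(b_{1},[b_{2},a])\in(\mathfrak{A}^{1},\mathfrak{A}^{1})=0$, and since $\mathfrak{A}^{0}$ is isotropic and nondegenerately paired with $\mathfrak{A}^{1}$ we have $(\mathfrak{A}^{0})^{\perp}=\mathfrak{A}^{0}$, hence $[b_{1},b_{2}]\in\mathfrak{A}^{0}$.

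The genuine gap is the step you explicitly postpone: the sign accounting does not end in ``a uniform coefficient $-1$'' for the naive $\mathfrak{t}_{0}=\sum_i e_{i}\otimes e^{i}$, $\mathfrak{t}_{1}=\sum_i e^{i}\otimes e_{i}$ once the bases contain super-odd vectors (which they do here, since $\mathfrak{A}^{0}\cong q_{n}$ has odd elements). Fix the convention $(e_{i},e^{j})=\delta_{ij}$ and let $p_{i}$ be the common parity of $e_{i},e^{i}$. Writing $[a,e_{i}]=\sum_{j}c_{i}^{j}e_{j}$ and $[a,e^{j}]=\sum_{i}d_{j}^{i}e^{i}$ for homogeneous $a\in\mathfrak{A}^{0}$, invariance yields $c_{i}^{j}=-(-1)^{p(a)p_{i}}d_{j}^{i}$, whereas the claimed equality $[a\otimes 1,\mathfrak{t}_{0}]=-[1\otimes a,\mathfrak{t}_{0}]$ needs $c_{i}^{j}=-(-1)^{p(a)p_{j}}d_{j}^{i}$; on the support one has $p_{j}=p(a)+p_{i}$, so the two differ by the factor $(-1)^{p(a)}$, i.e.\ for odd $a$ the bracket identity comes out with $+1$ instead of $-1$ (switching to the convention $(e^{i},e_{j})=\delta_{ij}$ merely moves the same defect into the $\mathfrak{t}_{1}$-identity). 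A concrete instance inside the paper's own setting: in $\mathfrak{gl}(1|1)$ with $a=h_{1}\in q_{1}$ and $\mathfrak{t}_{0}=h_{0}\otimes h^{1}_{0}-h_{1}\otimes h^{1}_{1}$ one computes $[h_{1}\otimes 1,\mathfrak{t}_{0}]=[1\otimes h_{1},\mathfrak{t}_{0}]=-2\,h_{0}\otimes h^{1}_{1}$. Likewise, the ``packaged'' argument proves the four identities for the sign-twisted element $\sum_i(-1)^{p_{i}}e^{i}\otimes e_{i}$ (the actual second half of the invariant Casimir), not for $\mathfrak{t}_{1}$; the ``parity-dependent normalization'' you set aside is therefore not cosmetic --- it changes the statement on odd components. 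So you must either carry out the representative-term computation and build the $(-1)^{p_{i}}$ twist into $\mathfrak{t}_{1}$ (equivalently, fix one one-sided duality convention and use it consistently in both tensor factors), restating the proposition accordingly, or the proof of the literal statement fails for odd $a$ and $b$.
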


\begin{proof}
The proof is based on the properties of an invariant bilinear form.

\end{proof}

\vspace{0.5cm}

 Now, using  the previous proposition,  we calculate the value of  $\delta$  on $h^1_{i,0}\cdot u$.
\begin{eqnarray*}
& \delta(h^1_{i,0} \cdot u) = [h^1_{i,0} \cdot v \otimes 1 + 1 \otimes h^1_{i,0} \cdot u, \frac{1}{2} \dfrac{\mathfrak{t}_0 + \mathfrak{t}_1}{u - v} + \frac{1}{2} \dfrac{\mathfrak{t}_0 - \mathfrak{t}_1}{u + v}] = \\
&[h^1_{i,0} \cdot v \otimes 1 - h^1_{i,0} \cdot u \otimes 1, \frac{1}{2} \dfrac{\mathfrak{t}_0 + \mathfrak{t}_1}{u - v}] + [ h^1_{i,0} \cdot v \otimes 1 + h^1_{i,0}\cdot u \otimes 1, \frac{1}{2} \dfrac{\mathfrak{t}_0 - \mathfrak{t}_1}{u + v} ] = \\
&[h^1_{i,0} \otimes 1, \frac{1}{2} (\mathfrak{t}_0 + \mathfrak{t}_1)] +
[h^1_{i,0} \otimes 1, \frac{1}{2}(\mathfrak{t}_0 - \mathfrak{t}_1)] = [h^1_{i,0} \otimes 1, \mathfrak{t}_0]= -[ 1 \otimes h^1_{i,0}, \mathfrak{t}_1].
\end{eqnarray*}

Similarly, the values of the cocycle on other generators can be calculated.

\begin{eqnarray*}
& \delta(h^1_{i,1} \cdot u) = -[h^1_{i,1} \otimes 1, \mathfrak{t}_0]= [1 \otimes h^1_{i,1}, \mathfrak{t}_1];\\
& \delta(x^{\pm,1}_{i,0} \cdot u) = -[x^{\pm,1}_{i,0} \otimes 1, \mathfrak{t}_0]= [1 \otimes x^{\pm, 1}_{i,0}, \mathfrak{t}_1];\\
& \delta(x^{\pm, 1}_{i,1} \cdot u) = -[x^{\pm,1}_{i,1} \otimes 1, \mathfrak{t}_0]= [1 \otimes x^{\pm, 1}_{i,1}, \mathfrak{t}_1].\\
\end{eqnarray*}

   From the condition of homogeneity of quantization it follows that \\
\(\Delta(h_{i,0,1})= \Delta_0(h_{i,0,1}) + \hbar F(x_{-\alpha,0,0}\otimes x_{\alpha,0,0}, x_{-\alpha,1,0} \otimes x_{\alpha,1,0} + h_{i, 1, 0} \otimes h_{i,1,0}) \)\\

From the correspondence principle (item 5) of the quantization definition) it follows that

\[\hbar^{-1}(\Delta(h_{i,0,1})- \Delta^{op}(h_{i,0,1}) = F - \tau F = [h^1_{i,0}\otimes 1, \mathfrak{t}_1]. \]

%\vspace{0.5cm}

Let  \[\bar{t}_0 = \sum_{\alpha \in \Delta_+} x_{-\alpha,0,1} \otimes x_{\alpha,0,0} - x_{\alpha,1,1} \otimes x_{-\alpha,1,0},\]

\(\Delta_+ -\) be a set of positive roots of simple Lie algebra   \(A_{n-1}= \mathfrak{sl}(n).\) \\

\vspace{0.5cm}

Let's define  \(\Delta(h_{i,1,1})\) by formula
 \[\Delta(h_{i,1,1})= \Delta_0(h_{i,1,1}) + \hbar [1 \otimes h^1_{i,1}, \mathfrak{t}_0].\]  Let us verify that the correspondence principle is satisfied in this case:

$$\hbar^{-1}(\Delta(h_{i,1,1})- \Delta^{op}(h_{i,1,1}) = [h^1_{i,1} \otimes 1, \sum_{\alpha \in \Delta_+} x^1_{\alpha,0} \otimes x^{-\alpha,0} - x^1_{\alpha,1} \otimes x^1_{-\alpha,1}]$$
$$- [1 \otimes h^1_{i,1} , \sum_{\alpha \in \Delta_+} x{-\alpha,0} \otimes x^1_{\alpha,0} - x_{-\alpha,1} \otimes x^1_{\alpha,1}].$$

We show, that
\[[h^1_{i,1} \otimes 1, \sum x^1_{-\alpha,0}\otimes x_{\alpha,0} + x^1{-\alpha,1}\otimes x_{\alpha,1}] = [1 \otimes h^1{i,1}, \sum x_{-\alpha,0}\otimes x^1_{\alpha,0} + x_{-\alpha,1}\otimes x^1_{\alpha,1}]. \]
Actually,
\[[h^1_{i,0} \otimes 1, \sum x^1_{-\alpha,0}\otimes x_{\alpha,0} + x^1_{-\alpha,1}\otimes x_{\alpha,1}] = \sum [h^1_{i,1}, x^1{-\alpha,0}]\otimes x_{\alpha,0} + [h^1_{i,0}, x^1_{-\alpha,1}] \otimes x_{\alpha,1}.\]
On the other hand \\
\[ [1 \otimes h^1_{i,1}, \sum x_{-\alpha,0}\otimes x^1_{\alpha,0} + x_{-\alpha,1}\otimes x^1_{\alpha,1}] = \sum x_{-\alpha,0}\otimes [h^1_{i,1}, x^1_{\alpha,0}] + x_{-\alpha,1} \otimes [h^1_{i,1}, x^1_{\alpha,1}].\]

Easy to check, that
$$[h^1_{i,1}, x^1_{\alpha_i - \alpha_j, 0}] = -(\delta_{ik} + \delta_{jk} - \delta_{i,k+1} - \delta_{j,k+1})x^1_{\alpha_i - \alpha_j,1}, \quad
[h^1_{i,1}, x^1_{\alpha_i - \alpha_j,0}] = (\delta_{ik} - \delta_{jk} - \delta_{i,k+1} + \delta_{j,k+1})x_{\alpha_i - \alpha_j,0}.$$
Therefore,
\[[h^1_{i,1}, x^1_{-\alpha,0}]\otimes x_{\alpha,0}= - x_{-\alpha,1} \otimes [h^1_{i,1}, x^1_{\alpha,1}], \quad
[h^1_{i,1}, x^1_{-\alpha,1}]\otimes x_{\alpha,1} = x_{-\alpha,0} \otimes [h^1_{i,1}, x^1_{\alpha,0}].\]

Then, \\
\[[h^1_{i,1} \otimes 1, \sum x^{-\alpha}\otimes x_{\alpha,0} + x^1{-\alpha,1}\otimes x_{\alpha,1}] = [1 \otimes h^1_{i,1}, \sum x_{-\alpha}\otimes x^1_{\alpha,0} + x_{-\alpha,1}\otimes x^1_{\alpha,1}] \]
and equality \\
\[h^{-1}(\Delta(h_{i,1,1})- \Delta^{op}(h_{i,1,1}) = -[1 \otimes h^1_{i,1}, \sum_{\alpha \in \Delta_+} x_{\alpha,0} \otimes x^1_{-\alpha,0} - x_{\alpha, 1} \otimes x^1_{-\alpha,1}] +\]
\[[h^1_{i,1} \otimes 1, \sum_{\alpha \in \Delta_+} x^1_{-\alpha,0} \otimes x_{\alpha,0} - x^1_{-\alpha,1} \otimes x_{\alpha,1}].\]
is proved.

Similarly, one can define a comultiplication on other generators. We get that
\[\Delta(x^+_{i,1,0})= \Delta_0(x^+_{i,1,0}) + \hbar [x^{+,1}_{i,0} \otimes 1, \bar{\mathfrak{t}}_0], \qquad
\Delta(x^-_{i,1,0}) = \Delta_0(x^-_{i,1,0}) + \hbar [1 \otimes x^{-,1}_{i,0} , \bar{\mathfrak{t}}_0].\]

It can be checked  directly that the following relations are preserved by comultiplication.
\[[h_{i,1,0}, x^{\pm}_{j,0,0}] = \pm (\delta_{i,j} - \delta_{i-1,j}) x^{\pm}_{j,1,0}, \qquad
[h_{i,1,1}, x^{\pm}_{j,0,0}] = \pm(a_{ij}) x^{\pm}_{j,1,1}, \]
\[ [h_{i,1,0}, x^{\pm}_{j,1,0}] = \pm (\delta_{i,j} - \delta_{i-1,j})x^{\pm}_{j,1,1}, \qquad
[h_{i,1,1}, x^{\pm}_{j,1,0}] = \pm \tilde{a}_{ij} x^{\pm}_{j,0,1}. \]

\vspace{0.5cm}

\[ [h_{i,1,0}, x^{\pm}_{j,0,0}]= \pm \tilde{a}_{ij} x^{\pm}_{j,1,0}, \qquad
[h_{i,1,1}, x^{\pm}_{j,0,0}]= \pm \tilde{a}_{ij} x^{\pm}_{j,1,1}, \]
\[[h_{i,1,0}, x^{\pm}_{j,1,0}]= \pm \tilde{a}_{ij} x^{\pm}_{j,1,1}, \qquad
[h_{i,1,1}, x^{\pm}_{j,1,0}]= \pm \tilde{a}_{ij} x^{\pm}_{j,0,1}. \]

\section{Yangian $Y(q_1)$}

In this section, we consider in detail the definition of the Yangian of the queer Lie superalgebra in the simplest case of a queer Lie superalgebra of type $q_1$, in order to demonstrate the features of our approach in the simplest case and compare our definition with M. Nazarov's approach. The main result of this section is the proof of the theorem on the isomorphism of the Yangian, obtained with our approach and the Yangian, defined by M. Nazarov.

\subsection{Queer Lie superalgebra $q_1$ and graded by involution Lie superalgebra $\mathfrak{gl}(1,1)$}

In this case we are dealing not with the basic Lie superalgebra. Nevertheless, the general construction of quantization considered in the previous subsection can also be used in this particular case. We recall the definition of the Lie superalgebra $q_1$. This superalgebra is generated by the generators $h_0, h_1$, and $p(h_0) = 0 $, $p (h_1) = 1$, which satisfy the following relations

\begin{eqnarray}
&[h_0, h_0]=0, \quad [h_0, h_1]=0, \quad\\
&[h_1, h_1] = 2h_0. \quad
\end{eqnarray}

The relations for a twisted superalgebra of currents with values in $\mathfrak{gl}(1,1)$ have the following form.

\begin{eqnarray}
&[h_0, h^1_0]=[h^1_0, h^1_0] = 0, \quad \\
&[h^1_0, h^1_1] = -[h^1_1, h^1_0] = \frac{1}{2}h_1, \quad\\
& [h^1_1, h^1_1] = -\frac{1}{2} h_0, \quad \\
&[h^1_0, h_1] = 2 h^1_1, \quad\\
&[h^1_1, h_1] = 2h^1_1. \quad
\end{eqnarray}

The Casimir element corresponding to the invariant scalar product in $\mathfrak{gl}(1,1)$ is defined by the formula:

\begin{equation}
\mathfrak{t}_0 = h^1_1 \otimes h_1 + h^1_0 \otimes h_0 - h_1 \otimes h^1_1 + h_0 \otimes h^1_0.
\end{equation}

We note that in this case the structure of the bisuperalgebra on the twisted current algebra $\mathfrak{gl}(1,1)^{tw}[t]$ is defined by a bracket
$$\delta: \mathfrak{gl}(1, 1)^{tw}[t] \rightarrow \mathfrak{gl}(1, 1)^{tw}[t] \hat{\otimes} \mathfrak{gl}(1, 1)^{tw}[t],$$
which is given by formula:

\begin{equation}
\delta: a(u) \rightarrow [a(u) \otimes 1 + 1 \otimes a(v), r_{\sigma}(u, v)], \quad r_{\sigma}(u, v) = \frac{1}{2}\dfrac{\mathfrak{t}_0 + \mathfrak{t}_1}{u - v} + \frac{1}{2}\dfrac{\mathfrak{t}_0 - \mathfrak{t}_1}{u + v}, \quad
\end{equation}
where

$$ \mathfrak{t}_0 = h_0 \otimes h^1_0 - h_1 \otimes h^1_1, \quad \mathfrak{t}_1 = h^1_0 \otimes h_0 - h^1_1 \otimes h_1.$$

As above, properties of $r_{\sigma}(u, v)$ are checked.

Next we will be interested in the Yangian of the Lie superalgebra $sq_1$, that is, $Y(sq_1) $, and also Yangian $Y (Q_1)$.

\begin{opr}
Yangian $Y(sq_1)$ is an associative superalgebra generated by generators $h_{0,0}, h_{1,0}, h_{1,0}, h_{1,1}$ which satisfy the following defining relations:
\begin{eqnarray}
&[h_{0,i}, h_{0,j}] = 0, \quad i,j \in \{0, 1\}, \qquad \\
&[h_{1,0}, h_{1,0}] = 2h_{0,0}, \qquad\\
&[h_{0,0}, h_{i,k}] = 0, \quad  i,j \in \{0, 1\}, \qquad \\
&[h_{0,1}, h_{1,0}] = 2 h_{1,1}, \qquad\\
&[h_{1,0}, h_{1,1}] = 0. \qquad
\end{eqnarray}

\end{opr}

Yangian $Y(sq_1)$ is a Hopf superalgebra with comultiplication on generators which is given by the following formulas:
\begin{eqnarray}
&\Delta(h_{i,0}) = h_{i,0}\otimes 1 + 1 \otimes h_{i,0}, \quad i = 0,1, \qquad\\
&\Delta(h_{0,1}) = h_{0,1}\otimes 1 + 1 \otimes h_{0,1} + h_{1,0} \otimes h_{1,0}, \qquad \\
&\Delta(h_{1,1}) = h_{1,1}\otimes 1 + 1 \otimes h_{1,1} - h_{1,0} \otimes h_{0,0} + h_{0,0} \otimes h_{1,0}. \qquad
\end{eqnarray}

\vspace{1cm}

\subsection{Quantization and definition of Yangian  $Y(q_1)$}

 We will use the modification of Drinfeld's approach to the definition of the Yangian, first proposed by S. Levendorskii.

Let's define a comultiplication on generators  $h_{0,1}$ and $h_{1,1}$ by the following formulas:

\begin{eqnarray}
&\Delta(h_{0,1}) = h_{0,1} \otimes 1 + 1 \otimes h_{0,1} + \hbar [2h^1_0 \otimes 1, h^1_0 \otimes h_0 - h^1_1 \otimes h_1], \quad \\
&\Delta(h_{1,1}) = h_{1,1} \otimes 1 + 1 \otimes h_{1,1} + \hbar [2h^1_1 \otimes 1, h^1_0 \otimes h_0 - h^1_1 \otimes h_1]. \quad
\end{eqnarray}

In other words we have the following equalities

\begin{eqnarray}
&\Delta(h_{0,1}) = h_{0,1} \otimes 1 + 1 \otimes h_{0,1}  - \hbar(h_{1,0} \otimes h_{1,0}), \quad \\
&\Delta(h_{1,1}) = h_{1,1} \otimes 1 + 1 \otimes h_{1,1} + \hbar(- h_{1,0} \otimes h_{0,0} + h_{0,1} \otimes h_{1,0}). \quad
\end{eqnarray}

Let's check that the comultiplication defined on generators of the first order satisfies the correspondence principle:

$$\hbar^{-1}(\Delta(h_{i,1}) - \Delta^{op}(h_{i,1})) = \delta(h^1_i \cdot u), \quad i = 0, 1.$$

In other words, we need to verify that equality
$$ \hbar^{-1}(\Delta(h_{i,1}) - \Delta^{op}(h_{i,1})) = [h^1_i \cdot u \otimes 1 + 1 \otimes h^1_i \cdot v, r_{\sigma}(u, v)].  $$

Let's note, that the proof of this fact repeats the reasoning in section 3.

\vspace{0.5cm}

\subsection{Nazarov Yangian  $Y_N(q_1)$}

Recall that Yangian $Y_N(q_1)$ introduced by M. Nazarov is the associative unital superalgebra over $\mathbb{C}$ with countable set of generators
$$t^m_{i,j}, \quad i,j = \pm 1, \quad m = 1,2, \ldots.  $$
The $\mathbb{Z}_2$ -- grading of the $Y_N(q_1)$ is defined as follows $p(t^m_{1,1}) = p(t^m_{-1,-1}) = 0$,  $p(t^m_{1,-1}) = p(t^m_{-1,1}) = 1$.

To write down the defining relations for these generators we employ the formal series in $Y_N(q_1)[[u^{-1}]]$:
$$ t_{i,j}(u) = \delta_{i,j}\cdot 1 + t^1_{i,j} u^{-1} + t^2_{i,j} u^{-2} + \ldots .   $$

Then for all possible indices $i,j,k,l$ we have the relations:

\begin{eqnarray} \label{eq:nq1}
&(u^2 - v^2)([t_{i,j}(u), t_{k,l}(v)]) \cdot (-1)^{p(i)p(k)+p(i)p(l)+p(k)p(l)} = \qquad \nonumber \\
&(u+v)(t_{k,j}(u)t_{i,l}(v) -t_{k,j}(v)t_{i,l}(u)) - (u-v)(t_{-k,j}(u)t_{-i,l}(v) -t_{k,-j}(v)t_{i,-l}(u))(-1)^{p(k)+p(l)}, \qquad
\end{eqnarray}
where $v$ is a formal parameter independent for $u$, and we have an equality in the algebra of formal Laurent series in $u^{-1}$, $v^{-1}$ with coefficients in $Y(q_1)$. Also we have the relations
\begin{equation} \label{eq:nq11}
t_{i,j}(-u) = t_{-i,-j}(u).
\end{equation}

Note, that the relations (\ref{eq:nq1}) and (\ref{eq:nq11}) are equivalent to the following defining relations:
\begin{eqnarray}
&([t^{m+1}_{i,j}, t^{r-1}_{k,l}] - [t^{m-1}_{i,j}, t^{r+1}_{k,l}])\cdot (-1)^{p(i)p(k)+p(i)p(l)+p(k)p(l)} = \quad \nonumber \\
&t^{m}_{k,j} t^{r-1}_{i,l} + t^{m-1}_{k,j} t^{r}_{i,l} - t^{r-1}_{k,j} t^{m}_{i,l} - t^{r}_{k,j} t^{m-1}_{i,l} + \quad \nonumber \\
&(-1)^{p(k)+p(l)}(-t^{m}_{-k,j} t^{r-1}_{-i,l} + t^{m-1}_{-k,j} t^{r}_{-i,l} + t^{r-1}_{k,-j} t^{m}_{i,-l} - t^{r}_{k,-j} t^{m-1}_{i,-l}), \qquad,  \label{eq:nq12}
\end{eqnarray}
\begin{equation} \label{eq:nq13}
t^m_{-i,-j} = (-1)^m t^m_{i,j},
\end{equation}
where $m,r = 1,2, \ldots$ and $t^0_{i,j} = \delta_{i,j}$.

Recall that $Y_N(q_1)$ is a Hopf algebra with comultiplication given by the formula
\begin{equation} \label{eq:comny1}
\Delta(t^m_{i,j}) = \sum_{s=0}^{m} \sum_{k} (-1)^{(p(i)+p(k))(p(j)+p(k))} t^s_{i,k} \otimes t^{m-s}_{k,j}.
\end{equation}

Let's note that this definition can be rewrite using generating functions as follows. Let, $I=\{1,2 \ldots, n$, $I_1 = I \bigcup (-I) = \{\pm 1, \pm 2, \ldots, \pm n \}$ and  as above
$$ J = \sum_{i \in I_1} E_{i,-i}(-1)^{p(i)}, $$
$$ P = \sum_{i, j \in I_1} E_{i,j}\otimes E_{j,i}(-1)^{p(j)},  \quad P_1 = P\otimes 1, \quad P_2 = 1 \otimes P,  $$
$$T(u) = \sum_{i,j \in I_1} E_{i,j} \otimes t_{i,j}(u), \quad T_1(u) = T(u) \otimes 1, \quad T_2(u) = 1 \otimes T(u). $$
We define quantum R-matrix by formula:
\begin{equation}
R(u,v) = 1 - \dfrac{P}{u-v} + \dfrac{PJ_1J_2}{u+v}.
\end{equation}

Then defining relations can be presented in the following form

\begin{equation}
(R(u,v)\otimes 1) T_(u)T_2(v) = T_2(v)T_1(u) (R(u,v)\otimes 1).
\end{equation}

We write explicitly the relations for the Yangian $Y_N(q_1)$:

\begin{equation*}
[t^m_{-1,1}, t^k_{-1,1}] = \sum_{r=1}^{m-1} \left(t^{k+r-1}_{-1,1} t^{m-r}_{-1,1} - t^{m-r}_{-1,1} t^{k+r-1}_{-1,1}\right) +
\end{equation*}

\begin{equation}
\sum_{r=1}^{m-1}(-1)^r \left((-1)^{k+m}t^{k+r-1}_{1,1} t^{m-r}_{1,1} - t^{m-r}_{1,1} t^{k+r-1}_{1,1}\right).
\end{equation}

Similarly, we obtain, that

\begin{equation*}
[t^m_{1,1}, t^k_{1,1}] = -\sum_{r=1}^{m-1} \left[t^{k+r-1}_{1,1}, t^{m-r}_{1,1}\right] +
\end{equation*}
\begin{equation}
\sum_{r=1}^{m-1}(-1)^r \left((-1)^{k+m}t^{k+r-1}_{-1,1} t^{m-r}_{-1,1} - t^{m-r}_{-1,1} t^{k+r-1}_{-1,1}\right).
\end{equation}

Last relation has the following form:

\begin{equation*}
[t^m_{1,1}, t^k_{-1,1}] = \sum_{r=1}^{m-1} \left(t^{k+r-1}_{1,1} t^{m-r}_{-1,1} - t^{m-r}_{1,1} t^{k+r-1}_{1,1}\right) +
\end{equation*}

\begin{equation}
\sum_{r=1}^{m-1}(-1)^r \left((-1)^{k+m}t^{k+r-1}_{-1,1} t^{m-r}_{1,1} - t^{m-r}_{-1,1} t^{k+r-1}_{1,1}\right).
\end{equation}

\vspace{0.5cm}

\subsection{Yangian  $Y(sq_1)$. Current generators.}

Let
$$\{a,b\} := a\cdot b + (-1)^{p(a)p(b)} b \cdot a $$
be an anticommutator.

\begin{opr}
Yangian  $\bar{Y}(sq_1)$ is  the associative unital superalgebra over $\mathbb{C}$ with countable set of generators $\tilde{h}_{0,k}$, $\tilde{h}_{1,k}$, $k \in \mathbb{Z}_{k \geq 0}$, which satisfy the following system of defining relations.

\begin{eqnarray}
&[\tilde{h}_{0,2k}, \tilde{h}_{0,2r}] = [\tilde{h}_{2k+1}, \tilde{h}_{2r+1}] = 0, \quad [\tilde{h}_{0,0}, \tilde{h}_{0,k}] =0,  \qquad, \\
&[\tilde{h}_{0, 1}, \tilde{h}_{1,k}] = 2\tilde{h}_{1,k} + \frac{1}{2}(\tilde{h}_{0,0}\tilde{h}_{1,k} + \tilde{h}_{1,k}\tilde{h}_{0,0}), \qquad \\
&[\tilde{h}_{1,0}, \tilde{h}_{1,k}] = \tilde{h}_{0,k}, \qquad, \\
&[\tilde{h}_{0, k+1}, \tilde{h}_{1,r}] = [\tilde{h}_{0,k}, \tilde{h}_{1, r+1}] + \{\tilde{h}_{0,k}, \tilde{h}_{1,r}\} + \{\tilde{h}_{1,k}, \tilde{h}_{0,r}\}, \qquad\\
& [\tilde{h}_{1, k+1}, \tilde{h}_{1,r}] - [\tilde{h}_{1,k}, \tilde{h}_{1, r+1}] = \{\tilde{h}_{1,k}, \tilde{h}_{1,r}\} + \{\tilde{h}_{0,k}, \tilde{h}_{0,r}\}, \qquad
\end{eqnarray}
\end{opr}
\vspace{0.5cm}

\begin{theorem} \label{th:01}
Yangian $Y(sq_1)$ is isomorphic as an associative algebra to $\bar{Y}(sq_1)$.
\end{theorem}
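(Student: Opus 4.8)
The plan is to exhibit a mutually inverse pair of algebra homomorphisms between $Y(sq_1)$ and $\bar Y(sq_1)$. The superalgebra $Y(sq_1)$ is presented by the four generators $h_{0,0},h_{1,0},h_{0,1},h_{1,1}$ subject to the relations in the Definition, while $\bar Y(sq_1)$ is presented by the infinite families $\tilde h_{0,k},\tilde h_{1,k}$ ($k\ge 0$) with the current relations. The first move is to \emph{identify the degree-zero and degree-one pieces}: set $\tilde h_{0,0}\mapsto h_{0,0}$, $\tilde h_{1,0}\mapsto h_{1,0}$, and then use the relation $[\tilde h_{1,0},\tilde h_{1,k}]=\tilde h_{0,k}$ together with $[\tilde h_{0,1},\tilde h_{1,k}]$ and the ``shift'' relations $[\tilde h_{0,k+1},\tilde h_{1,r}]=[\tilde h_{0,k},\tilde h_{1,r+1}]+\{\tilde h_{0,k},\tilde h_{1,r}\}+\{\tilde h_{1,k},\tilde h_{0,r}\}$ to define the higher $\tilde h_{0,k},\tilde h_{1,k}$ recursively in terms of the images of $h_{0,1},h_{1,1}$. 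Concretely, I would first send $\tilde h_{0,1}\mapsto h_{0,1}$, $\tilde h_{1,1}\mapsto h_{1,1}$, and then define images of $\tilde h_{0,k},\tilde h_{1,k}$ for $k\ge 2$ by iterated brackets with $h_{1,1}$ (and with $h_{0,1}$), so that the ``derivation-like'' relations of $\bar Y(sq_1)$ hold by construction.

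The substantive content is then to check that the remaining relations of $\bar Y(sq_1)$ — in particular the closed-form identities $[\tilde h_{0,1},\tilde h_{1,k}]=2\tilde h_{1,k}+\tfrac12\{\tilde h_{0,0},\tilde h_{1,k}\}$ and the top relation $[\tilde h_{1,k+1},\tilde h_{1,r}]-[\tilde h_{1,k},\tilde h_{1,r+1}]=\{\tilde h_{1,k},\tilde h_{1,r}\}+\{\tilde h_{0,k},\tilde h_{0,r}\}$ — are consequences of the four defining relations of $Y(sq_1)$. This is an induction on $k$ (and on $k+r$): assuming the relation verified at lower level, one commutes through with $h_{1,1}$ or $h_{0,1}$, uses the Jacobi (super)identity, the definition of the anticommutator $\{\cdot,\cdot\}$, and the base-case relations. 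For the reverse homomorphism $\bar Y(sq_1)\to Y(sq_1)$ one need only send $h_{0,0},h_{1,0},h_{0,1},h_{1,1}$ to $\tilde h_{0,0},\tilde h_{1,0},\tilde h_{0,1},\tilde h_{1,1}$ and verify that the five $Y(sq_1)$-relations hold among these particular elements of $\bar Y(sq_1)$ — most are immediate, and $[h_{1,0},h_{1,1}]=0$ is the $k=1$ instance of $[\tilde h_{1,0},\tilde h_{1,1}]=0$, which follows from $[\tilde h_{1,0},\tilde h_{1,k}]=\tilde h_{0,k}$ at $k=1$ read together with $[\tilde h_{1,0},\tilde h_{0,1}]=0$. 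Finally one checks the two composites are the identity on generators, which is automatic from the definitions, hence on the whole algebras.

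The main obstacle I anticipate is the well-definedness of the map $Y(sq_1)\to\bar Y(sq_1)$: one must be sure that the recursively defined images satisfy \emph{every} current relation, not just those used in the recursion, and that no inconsistency arises (e.g. that defining $\tilde h_{0,k}$ via $[\tilde h_{1,0},\tilde h_{1,k}]$ agrees with what the shift relation $[\tilde h_{0,k+1},\tilde h_{1,r}]=\cdots$ forces). This is the standard ``PBW/consistency'' issue for current presentations of Yangians, and the cleanest way around it is to prove first that $\bar Y(sq_1)$ is \emph{generated} by $\tilde h_{0,0},\tilde h_{1,0},\tilde h_{0,1},\tilde h_{1,1}$ (immediate from the shift relation by induction) and that the obvious surjection $Y(sq_1)\twoheadrightarrow\bar Y(sq_1)$ — defined on the four generators and checked to respect the five $Y(sq_1)$-relations — is also injective. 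Injectivity would follow by comparing graded dimensions: both sides have a triangular/PBW-type basis in the monomials in $\tilde h_{0,k},\tilde h_{1,k}$ (resp. their images), so the surjection cannot collapse anything. In the write-up I would present the argument in the direction $Y(sq_1)\twoheadrightarrow\bar Y(sq_1)$ plus a dimension count rather than constructing two maps by hand, as that sidesteps the consistency bookkeeping.
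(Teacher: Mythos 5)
Your overall scheme --- identify the degree-zero and degree-one generators, produce the higher current generators by iterated brackets, verify the remaining current relations by induction, and then argue surjectivity plus injectivity --- is essentially the same inductive scheme the paper follows: it defines a map $G\colon Y(sq_1)\to\bar Y(sq_1)$ on the four generators and then introduces $h_{1,k+1}:=[h_{0,1},h_{1,k}]$, $h_{0,2k}:=[h_{1,0},h_{1,2k}]$ inside $Y(sq_1)$ to recover the current generators. The genuine gap is at your very first step: the naive identification $h_{0,1}\mapsto\tilde h_{0,1}$, $h_{1,1}\mapsto\tilde h_{1,1}$ is not a homomorphism. In $\bar Y(sq_1)$ the relevant relation is $[\tilde h_{0,1},\tilde h_{1,k}]=2\tilde h_{1,k}+\tfrac12\bigl(\tilde h_{0,0}\tilde h_{1,k}+\tilde h_{1,k}\tilde h_{0,0}\bigr)$, so $[\tilde h_{0,1},\tilde h_{1,0}]$ carries the extra anticommutator term $\tfrac12\{\tilde h_{0,0},\tilde h_{1,0}\}$ and cannot equal $2\tilde h_{1,1}$; hence the $Y(sq_1)$ relation $[h_{0,1},h_{1,0}]=2h_{1,1}$ is not respected by your assignment. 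Likewise your check that $[h_{1,0},h_{1,1}]=0$ is preserved conflicts with the listed relation $[\tilde h_{1,0},\tilde h_{1,k}]=\tilde h_{0,k}$, which at $k=1$ gives $[\tilde h_{1,0},\tilde h_{1,1}]=\tilde h_{0,1}$, not $0$. This is the Levendorskii phenomenon familiar from $Y(\mathfrak{sl}_2)$: the degree-one Cartan generator of the minimal presentation differs from the current-presentation one by a correction expressed in the degree-zero generators, and the paper builds this in from the start by taking $G(h_{i,0})=\tilde h_{i,0}$, $G(h_{1,1})=\tilde h_{1,1}$ but $G(h_{0,1})=\tilde h_{0,1}+\tfrac12\tilde h_{0,0}$. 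Without such a twist neither your forward map nor your reverse map exists, and the induction you outline never gets started.

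A secondary point: your injectivity step (``both sides have a PBW-type basis, so the surjection cannot collapse anything'') is only an assertion; a PBW theorem for $\bar Y(sq_1)$, or for $Y(sq_1)$ in this finite presentation, is not available off the shelf and would itself require proof. To be fair, the paper's own verification is also only sketched (the relation checks are left as ``easy to check''), but its map has the correct form; your write-up should adopt the corrected assignment for $h_{0,1}$ and only then carry out the inductive verification and the surjectivity/injectivity bookkeeping you describe.
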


\begin{proof}
Let's define map
\begin{equation}
G: Y(sq_1) \rightarrow \bar{Y}(sq_1)
\end{equation}
by the following formulas on generators
\begin{eqnarray}
&G(h_{i,0}) = \tilde{h}_{i,0}, \quad \label{eq:CurY1} \\
&G(h_{0,1}) = \tilde{h}_{0,1} + \frac{1}{2}\tilde{h}_{0,0}, \qquad  \label{eq:CurY2}\\
&G(h_{1,1}) = \tilde{h}_{1,1}. \qquad  \label{eq:CurY3}
\end{eqnarray}

We show that the mapping given by the formulas (\ref{eq:CurY1}) --  (\ref{eq:CurY2}) respects the defining relations of $Y(sq_1)$ and $\bar{Y}(sq_1)$.

Let's define the new generators in $Y(sq_1)$ by the following formulas:

\begin{equation}
h_{1,k+1} := [h_{0,1}, h_{1,k}], \quad h_{0,2k} := [h_{1,0}, h_{1,2k}].
\end{equation}

Easy to check that the following relations are satisfied:

\begin{equation}
[h_{1,k+2}, h_{0,2r}] = [h_{1,k}, h_{0,2(r+1)}] + \ldots
\end{equation}

\end{proof}

We define the system of generators and defining relations of Drinfeld Yangian of the Lie superalgebra $sq_1$.

\begin{theorem}
Yangian $Y_D(sq_1)$ is an associative Hopf superalgebra over $\mathbb{C}$ generated by generators $h_{0,2k}$, $h_1{1,k}$, $k \in \mathbb{Z}_{k \geq 0}$, which satisfy the following defining relations:
\begin{eqnarray}
&[h_{2k}, h_{0,2l}] = 0, \quad \\
&[h_{0, 2k+2}, h_{1, l}] = [h_{1, 2k+1}, h_{0, l+1}] + [h_{0, 2k}, h_{1,l+2}] +\{h_{0,2k, h_{1,l+1}}\} + (-1)^{l}\{h_{1,2k},h_{0,l}\}, \quad\\
&[h_{1, k+2}, h_{1,l}] = [h_{1,k}, h_{1,l+2}] = \{h_{1,k+1}, h_{1,l}\}   + \{h_{0,k}, h_{0,l+1}\}. \quad
\end{eqnarray}
\end{theorem}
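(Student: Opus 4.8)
The strategy is to recognise $Y_D(sq_1)$ as a repackaging of the current–generator Yangian $\bar Y(sq_1)$ of the previous subsection, hence, by Theorem~\ref{th:01}, of $Y(sq_1)$, and then to obtain the Hopf structure by transport. First I would build an algebra homomorphism $\Phi\colon Y_D(sq_1)\to\bar Y(sq_1)$ by setting $\Phi(h_{1,k})=\tilde h_{1,k}$ for all $k\ge 0$ and $\Phi(h_{0,2k})=\tilde h_{0,2k}$ for all $k\ge 0$, where throughout the odd–level symbols $h_{0,m}$ ($m$ odd) occurring in the relations of the statement are read as the derived brackets $[h_{1,0},h_{1,m}]$, so that $\Phi$ sends them to $[\tilde h_{1,0},\tilde h_{1,m}]=\tilde h_{0,m}$. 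Well–definedness amounts to checking that the relations displayed in the statement hold among the $\tilde h$'s: several of them are literally defining relations of $\bar Y(sq_1)$, and the rest follow from those by exactly the manipulations already carried out in the proof of Theorem~\ref{th:01}, where the higher generators $h_{1,k+1}=[h_{0,1},h_{1,k}]$ and $h_{0,2k}=[h_{1,0},h_{1,2k}]$ were introduced. Conversely I would define $\Psi\colon\bar Y(sq_1)\to Y_D(sq_1)$ by $\Psi(\tilde h_{1,k})=h_{1,k}$ and $\Psi(\tilde h_{0,m})=[h_{1,0},h_{1,m}]$ (consistent with the generators $h_{0,2k}$ once the commuting relation of the statement is used), verify that $\Psi$ respects the defining relations of $\bar Y(sq_1)$, and observe that $\Phi$ and $\Psi$ are mutually inverse on generators.

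The substantive work is the verification that the compact list of relations in the statement is equivalent to the full set of relations of $\bar Y(sq_1)$, i.e., that the ``higher'' relations are consequences of the lowest ones. Concretely, one proves by induction on $k$ that the differenced relation $[h_{1,k+2},h_{1,l}]-[h_{1,k},h_{1,l+2}]=\{h_{1,k+1},h_{1,l}\}+\{h_{0,k},h_{0,l+1}\}$ and the mixed relation expressing $[h_{0,2k+2},h_{1,l}]$ follow, for all $l$, from their $k=0$ instances together with $[h_{1,0},h_{1,m}]=h_{0,m}$ and $[h_{0,1},h_{1,m}]=2h_{1,m}+\tfrac12\{h_{0,0},h_{1,m}\}$: one rewrites $h_{1,k+2}=[h_{0,1},h_{1,k+1}]$, applies the super Jacobi identity, and collects the resulting anticommutators. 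Keeping careful track of the $\mathbb Z_2$–parities and of the signs $(-1)^{l}$ through this induction is, I expect, the main obstacle; it is the $q_1$–analogue of S.~Levendorskii's argument that the degree–$\le 2$ relations generate all the relations of a Yangian, and it is precisely what legitimises the economical presentation in the statement.

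To rule out that further hidden consequences of the relations make $Y_D(sq_1)$ too small, I would put on $Y_D(sq_1)$ the natural filtration in which $h_{0,2k}$ and $h_{1,k}$ are assigned degrees compatible with the $u$–grading of the twisted current superalgebra, and observe that the associated graded relations are exactly the defining relations of $\mathfrak g[u]^{\tilde\sigma}$ for $n=1$ recorded in Section~\ref{s22}. This gives a surjection $U(\mathfrak g[u]^{\tilde\sigma})\twoheadrightarrow\operatorname{gr}Y_D(sq_1)$; since the relations of the statement simultaneously produce a PBW–type spanning set of $Y_D(sq_1)$ by ordered monomials in the $h_{i,k}$, of the same size, the surjection is an isomorphism, $Y_D(sq_1)$ is a flat deformation of $U(\mathfrak g[u]^{\tilde\sigma})$, and in particular $\Phi$ is injective as well as surjective. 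Combined with the first paragraph this yields $Y_D(sq_1)\cong\bar Y(sq_1)\cong Y(sq_1)$ as associative superalgebras.

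Finally, for the Hopf superalgebra structure I would transport the comultiplication $\Delta$, the counit $\varepsilon$ and the antipode $S$ of $Y(sq_1)$ through the isomorphism just constructed; equivalently, one can define $\Delta$ intrinsically on $Y_D(sq_1)$ by declaring $h_{0,0}$ and $h_{1,0}$ primitive, using the formulas for $\Delta(h_{0,1})$ and $\Delta(h_{1,1})$ from Section~3 rewritten in the current generators, and extending as an algebra homomorphism. Compatibility with the relations of the statement is checked on the low–level generators only, and then coassociativity, the counit identity and the antipode axioms propagate automatically to every $h_{i,k}$, since each $h_{i,k}$ is an iterated bracket (and product) of $h_{0,0},h_{1,0},h_{1,1}$ while $\Delta,\varepsilon,S$ are (anti)homomorphisms. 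Because, after the rescaling $h_{i,k}\mapsto\hbar^{k}h_{i,k}$ and extension of scalars to $\mathbb C[[\hbar]]$, these structure maps reduce modulo $\hbar$ to the standard cocommutative Hopf structure on $U(\mathfrak g[u]^{\tilde\sigma})$ and reproduce the cobracket $\delta$ of Section~\ref{s2} in first order, the algebra $Y_D(sq_1)$ is the quantization announced in the statement, and in particular a Hopf superalgebra.
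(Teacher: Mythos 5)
The first thing to note is that the paper itself gives no proof of this statement: it is introduced by the sentence ``We define the system of generators and defining relations of Drinfeld Yangian of the Lie superalgebra $sq_1$'' and in effect serves as the definition of $Y_D(sq_1)$, the substantive claims (that this presentation realizes the quantization of Section 3 and carries a Hopf superstructure) being left unargued. So your proposal cannot be compared with a paper argument; judged on its own, your route (identify $Y_D(sq_1)$ with $\bar Y(sq_1)\cong Y(sq_1)$ via Theorem~\ref{th:01}, establish flatness by a filtration/PBW argument, transport the Hopf structure) is the natural one, but two of its steps have genuine gaps.

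First, the flatness step is not a valid inference as written: from the surjection $U(\mathfrak g[u]^{\tilde\sigma})\twoheadrightarrow\operatorname{gr}Y_D(sq_1)$ together with a spanning set of ordered monomials ``of the same size'' you cannot conclude the surjection is injective --- in this infinite-dimensional setting equal cardinality of a spanning set and a basis gives no linear independence, and the independence of those monomials in $Y_D(sq_1)$ is exactly the PBW/flatness assertion to be proved. You need an independent lower bound: a sufficiently faithful representation, an identification with Nazarov's Yangian (whose PBW theorem comes from the RTT realization) under which the monomials remain independent, or a Diamond Lemma computation on the presentation. Second, the identification with $\bar Y(sq_1)$ is more delicate than reading the odd-indexed $h_{0,m}$ as $[h_{1,0},h_{1,m}]$: in $\bar Y(sq_1)$ all $\tilde h_{0,k}$ are generators, while in $Y_D(sq_1)$ only the even ones are, and the paper's own Theorem~\ref{th:02} (kernel of $F$ generated by the elements corresponding to $h_{0,2k+1}$, $k\geq 1$) indicates that these odd-indexed elements are killed in $Y_D(sq_1)$ rather than matched with nonzero generators $\tilde h_{0,2k+1}$. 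Hence your claim that $\Phi$ and $\Psi$ are mutually inverse is precisely what must be established, and the true relationship may be a quotient rather than an isomorphism; at minimum you must verify that the derived elements $[h_{1,0},h_{1,m}]$ satisfy every $\bar Y(sq_1)$ relation involving $\tilde h_{0,m}$ and that no further collapse occurs. The Hopf part is unobjectionable once an isomorphism with $Y(sq_1)$ is in hand, but if you define $\Delta$ intrinsically you must check compatibility with the entire infinite family of defining relations, not only with the low-level ones.
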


\vspace{0.5cm}

\vspace{0.5cm}

\subsection{Isomorphism between Nazarov Yangian  $Y_N(q_1)$ and Drinfeld Yangian $Y(q_1)$}

I recall the triangular decomposition construction for Yangian $Y_N(q_1)$. Let $T(u) = (t_{i,j}(u))_{i,j=\pm 1}$. Then we have the following decomposition
\begin{equation}
\begin{pmatrix} t_{1,1}(u)&t_{1,-1}(u) \\ t_{-1,1}(u)& t_{-1,-1}(u) \end{pmatrix} = \begin{pmatrix} 1 & 0 \\ h_{-1}(u)& 0 \end{pmatrix} \cdot \begin{pmatrix} d_{1}(u)& 0 \\ 0 & d_{2}(u) \end{pmatrix} \cdot \begin{pmatrix} 1 &h_{1}(u) \\ 0& 1 \end{pmatrix}.
\end{equation}

So, we have the following equality

\begin{equation}
\begin{pmatrix} t_{1,1}(u)&t_{1,-1}(u) \\ t_{-1,1}(u)& t_{-1,-1}(u) \end{pmatrix} = \begin{pmatrix} d_{1}(u)&d_{1}(u)\cdot h_1(u) \\ h_{-1}(u)\cdot d_1(u)& d_2(u) + h_{-1}(u) d_1(u) h_{1}(u). \end{pmatrix}
\end{equation}

We can formulate the main result of this subsection.

\begin{theorem} \label{th:02}
The correspondence
\begin{equation}
t_{1,1}(u)  \rightarrow h_0(u), \quad t_{1,1}(u)^{-1}t_{-1,1}(u) \rightarrow h_1(u)
\end{equation}
define the epimorphism
\begin{equation} \label{eq:F}
F: Y_N(q_1) \rightarrow Y_D(sq_1),
\end{equation}
where $Ker F = \langle h_{0,3}, h_{0,5}, \ldots    \rangle$ the ideal generated by elements $h_{0,2k+1}$ for $k\geq 1$.

%$SY_N(q_1) = Y_N(q_1) / \left\langle  det_q(T(u)) = 1 \right\rangle$.

\end{theorem}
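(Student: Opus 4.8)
\textbf{Proof proposal for Theorem \ref{th:02}.}

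The plan is to verify that the assignment $t_{1,1}(u)\mapsto h_0(u)$, $t_{1,1}(u)^{-1}t_{-1,1}(u)\mapsto h_1(u)$ extends to a well-defined algebra homomorphism $F$, then identify the kernel and show surjectivity. First I would make precise what the target series $h_0(u),h_1(u)$ are: writing $h_0(u)=1+\sum_{k\ge 1}h_{0,k}u^{-k}$ and $h_1(u)=\sum_{k\ge 1}h_{1,k}u^{-k}$ in $Y_D(sq_1)[[u^{-1}]]$, so that the generators $h_{0,2k}$ and $h_{1,k}$ of $Y_D(sq_1)$ are recovered as the relevant coefficients (the odd-index coefficients $h_{0,2k+1}$ being the elements that should die). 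Then, using the Gauss/triangular decomposition of $T(u)$ recalled just above the statement, I would translate the RTT relation \eqref{eq:nq1} for $Y_N(q_1)$ into relations among the quasi-determinant entries $d_1(u)=t_{1,1}(u)$, $d_2(u)=t_{-1,-1}(u)-t_{-1,1}(u)t_{1,1}(u)^{-1}t_{1,-1}(u)$, $h_{\pm 1}(u)$. Concretely, one extracts the $(1,1)\&(1,1)$, $(1,1)\&(-1,1)$, and $(-1,1)\&(-1,1)$ components of \eqref{eq:nq1}, together with the symmetry relation \eqref{eq:nq11}, and rewrites them purely in terms of $d_1(u)$ and $h_{-1}(u)$; conjugating the $t$'s into the Gauss coordinates is exactly the step where the quasi-determinant formalism of Gelfand–Retakh is used.

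Next I would compare the resulting relations with the defining relations of $\bar Y(sq_1)$ (equivalently $Y_D(sq_1)$ after Theorem \ref{th:01}). The symmetry $t_{i,j}(-u)=t_{-i,-j}(u)$ forces $t_{1,1}(-u)=t_{-1,-1}(u)$ and $t_{-1,1}(-u)=-t_{1,-1}(u)$; feeding this through the Gauss decomposition one gets parity constraints on $d_1,d_2,h_{\pm 1}$ which, at the level of coefficients, say precisely that the even-index part of $h_0(u)$ and the appropriate part of $h_1(u)$ survive while a certain odd-index family becomes redundant or central. Matching the quadratic RTT-derived relations for $d_1,h_{-1}$ against the relations $[h_{0,k+1},h_{1,r}]=\dots$ and $[h_{1,k+1},h_{1,r}]-[h_{1,k},h_{1,r+1}]=\{h_{1,k},h_{1,r}\}+\{h_{0,k},h_{0,r}\}$ of $\bar Y(sq_1)$ shows that $F$ respects all defining relations — this is the bulk of the computation but is routine once the Gauss coordinates are in place. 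Surjectivity is then immediate: $h_0(u)$ and $h_1(u)$ are in the image by construction, and since $h_{0,2k}$, $h_{1,k}$ generate $Y_D(sq_1)$, $F$ is an epimorphism.

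For the kernel, the strategy is two inclusions. The inclusion $\langle h_{0,3},h_{0,5},\dots\rangle\subseteq\ker F$ follows because in $Y_D(sq_1)$ these elements do not appear among the chosen generators, i.e. the presentation of $Y_D(sq_1)$ has only even-index $h_0$'s, so their images vanish; more precisely one checks from the $\bar Y(sq_1)$-relations that the odd coefficients $h_{0,2k+1}$ of $h_0(u)$ (as computed from the RTT side) are forced to zero in the quotient. For the reverse inclusion $\ker F\subseteq\langle h_{0,3},h_{0,5},\dots\rangle$, I would argue that modulo the ideal generated by the $h_{0,2k+1}$, $k\ge 1$, the algebra $Y_N(q_1)$ is already presented by exactly the relations of $Y_D(sq_1)$ under the correspondence — that is, the Gauss coordinates $d_1(u),h_{-1}(u)$ generate $Y_N(q_1)$ together with the odd coefficients (a PBW-type statement, which one can take from the known structure of $Y_N(q_1)$), and once those odd coefficients are killed the remaining relations are precisely those defining $Y_D(sq_1)$, so $F$ induces an isomorphism $Y_N(q_1)/\langle h_{0,3},h_{0,5},\dots\rangle\xrightarrow{\sim}Y_D(sq_1)$. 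The main obstacle I anticipate is this last point: controlling the kernel exactly, i.e. proving there are no further relations beyond the ideal $\langle h_{0,2k+1}:k\ge1\rangle$. This requires a PBW/filtration argument for $Y_N(q_1)$ in the Gauss coordinates (comparing associated graded algebras, or using that both sides are free modules of matching size over the even part), rather than a direct relation-chasing argument; the quasi-determinant identities guarantee that the generators transform correctly, but freeness/size-counting is what pins down that the kernel is no larger than claimed.
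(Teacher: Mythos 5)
Your route is essentially the paper's: use the triangular (Gauss) decomposition $T(u)=X^{-}(u)D(u)X^{+}(u)$, so that $t_{1,1}(u)=d_1(u)=h_0(u)$ and $t_{1,-1}(u)=d_1(u)h_1(u)$, match coefficients, check the RTT relations against the Drinfeld-type relations, and read off the kernel as the ideal of the odd-index coefficients of $h_0(u)$. Be aware, though, that the paper's printed proof stops at the coefficient identifications $t^{n}_{1,1}=h_{0,n-1}$ and $t^{n+1}_{1,-1}=\sum_{k}h_{0,k}h_{1,n-1-k}$ (with $h_{0,-1}=1$) and never carries out the relation check or the kernel computation, so the PBW/filtration argument you single out as the main obstacle is precisely the part that remains unproved there as well.
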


\begin{proof}

Let's check that map $F$ respects the defining relations of $Y_N(q_1)$ and $Y_D(sq_1)$.   We have that
$$t_{1,-1} = d_1(u)\cdot h_1(u).  $$
Therefore
$$ t_{1,-1} = \sum_{k=1}^{\infty} t_{1,-1}^k u^{-k} = \left( 1 + \sum_{k=0}^{\infty} h_{0,k} u^{-k-1} \right) \left(\sum_{r=0}^{\infty} h_{1,r}u^{-r-1} \right) = \sum_{n=0}^{\infty}\left(\sum_{k=-1}^{n-1} h_{0,k}\cdot h_{n-k}\right) u^{-n-1}.  $$
Here $h_{0,-1} = 1$. Therefore
$$t^{n+1}_{1,-1} = \sum_{k=-1}^{n-1} h_{0,k}\cdot h_{n-k} = - h_{1,n} + h_{0,0}h_{1,n-1} + \ldots + h_{0,n-1}h_{0,0}.   $$

Also, we obtain
$$t_{1,1} = 1 + \sum_{n=1}^{\infty} t^n_{1,1} u^{-n} = d_1(u) = h_0(u) = 1 + \sum_{n=0}^{\infty} h_{0,n} u^{-n-1}.  $$
Therefore
$$ t^n_{1,1} = h_{0, n-1}.  $$

\end{proof}

As a corollary of proved theorem we obtain a definition of Yangian $$Y_N(sq_1):= Y_N(q_1)/Ker F.$$

In Nazarov Yangian $Y_N(sq_1)$ we'll use only even generators $t_{11}^{2k}$, because only this generators corresponds to generators $h_{0,2k}$ in Drinfel'd Yangian $Y_D(sq_1)$. We have the following lemma is hold.

\begin{lm}
We have the following relation
\begin{equation} \label{eq:com_t}
[t_{11}^{2k}, t_{11}^{2r}] =0.
\end{equation}
for all $r,k >0$.
\end{lm}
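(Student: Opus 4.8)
The plan is to deduce the commutativity of the even coefficients $t_{11}^{2k}$ from the explicit commutation relations of $Y_N(q_1)$ written in components, together with the relation $t^m_{-i,-j} = (-1)^m t^m_{i,j}$, which ties the ``off-diagonal'' odd generators to the diagonal ones. The starting point is the relation displayed in the excerpt for $[t^m_{1,1}, t^k_{1,1}]$, namely
\begin{equation*}
[t^m_{1,1}, t^k_{1,1}] = -\sum_{r=1}^{m-1} [t^{k+r-1}_{1,1}, t^{m-r}_{1,1}] + \sum_{r=1}^{m-1}(-1)^r\bigl((-1)^{k+m} t^{k+r-1}_{-1,1} t^{m-r}_{-1,1} - t^{m-r}_{-1,1} t^{k+r-1}_{-1,1}\bigr).
\end{equation*}
First I would specialise to $m=2k$ and a general even index $2r$ (renaming to avoid clashes), and split the right-hand side into its ``diagonal'' part (the first sum) and its ``Clifford'' part (the second sum involving $t_{-1,1}$). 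The diagonal part is a telescoping/lower-order expression in the same family of generators, so an induction on $m+k$ (or on $\max(m,k)$) will reduce it to commutators with strictly smaller total degree, which vanish by the inductive hypothesis once we show the base case.

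The heart of the matter is the Clifford part. Here I would use the parity bookkeeping: $t_{-1,1}^{p}$ is odd, and by \eqref{eq:nq13} we have $t_{-1,1}^{p} = t_{-(-1),-1}^{p}\cdot(-1)^{p} = (-1)^{p} t_{1,-1}^{p}$, so these generators are, up to signs, the odd generators $t_{1,-1}^{p}$. For $m,k$ both even the prefactor $(-1)^{k+m}$ is $+1$, and the summand becomes $(-1)^r\bigl(t^{k+r-1}_{-1,1} t^{m-r}_{-1,1} - t^{m-r}_{-1,1} t^{k+r-1}_{-1,1}\bigr)$, i.e. a supercommutator (anticommutator, since both factors are odd) of two odd elements with indices $k+r-1$ and $m-r$ whose sum is $m+k-1$, an odd number. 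I would pair the term indexed by $r$ with the term indexed by $m-r$ (a symmetry of the sum $\sum_{r=1}^{m-1}$): under $r \mapsto m-r$ the sign $(-1)^r$ becomes $(-1)^{m-r} = (-1)^r$ since $m$ is even, while the pair of indices $(k+r-1,\,m-r)$ is swapped to $(k+m-r-1,\,r)$. This is not literally the same pair, so a direct cancellation needs one more input; I would invoke the remaining defining relations — specifically the relation of the form $[t^m_{1,1}, t^k_{-1,1}]=\dots$ and $[t^m_{-1,1}, t^k_{-1,1}]=\dots$ displayed just above the lemma — to rewrite $\{t^{p}_{-1,1}, t^{q}_{-1,1}\}$ for $p+q$ odd as an expression purely in the even diagonal generators of lower degree, thereby closing the induction.

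The main obstacle, and the step I expect to require the most care, is controlling the Clifford part: showing that the sum $\sum_{r}(-1)^r\{t^{k+r-1}_{-1,1}, t^{m-r}_{-1,1}\}$ collapses. The cleanest route is probably not term-by-term cancellation but rather to set up a simultaneous induction proving two statements at once — (i) $[t^{2k}_{11}, t^{2r}_{11}]=0$ and (ii) an explicit formula (or at least a ``triangularity'' statement) for $\{t^{p}_{-1,1}, t^{q}_{-1,1}\}$ with $p+q$ odd in terms of the $t^{2j}_{11}$ — so that each feeds the other. An alternative, possibly shorter, argument uses the $R$-matrix presentation: the relation $(R(u,v)\otimes 1)T_1(u)T_2(v) = T_2(v)T_1(u)(R(u,v)\otimes 1)$ restricted to the $(1,1)\otimes(1,1)$ matrix entry gives directly
\begin{equation*}
(u^2-v^2)[t_{11}(u), t_{11}(v)] = (u+v)\bigl(t_{11}(u)t_{11}(v) - t_{11}(v)t_{11}(u)\bigr) - (u-v)\bigl(t_{-1,1}(u)t_{-1,1}(v) - t_{1,-1}(v)t_{1,-1}(u)\bigr),
\end{equation*}
and using $t_{1,-1}(u) = t_{-1,1}(-u)$ together with $t_{11}(u)=d_1(u)$ one sees the generating-function identity forces the even part of $[t_{11}(u),t_{11}(v)]$ to vanish; extracting the coefficient of $u^{-2k-1}v^{-2r-1}$ (where both exponents are even in the sense of the $d_1$-grading) then yields \eqref{eq:com_t}. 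I would present the $R$-matrix version as the main proof and relegate the component bookkeeping to a remark.
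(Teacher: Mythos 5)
Your route (induction on the component relations, with the difficulty isolated in the ``Clifford part'') is in spirit the same as the paper's: the paper's own proof consists only of the base cases and the sentence setting up the inductive step, so you are not diverging from it in strategy. The problem is that the step on which the lemma actually rests is not carried out in either of your two versions, and in the version you designate as the main proof it is asserted rather than proved. The $(1,1)$--$(1,1)$ entry of the RTT relation, after substituting $t_{1,-1}(u)=t_{-1,1}(-u)$, can be rewritten as
\begin{equation*}
(u+v)(u-v-1)\,[t_{11}(u),t_{11}(v)]\;=\;-(u-v)\bigl(t_{-1,1}(u)t_{-1,1}(v)-t_{-1,1}(-v)t_{-1,1}(-u)\bigr),
\end{equation*}
and this identity by itself does not ``force the even part of $[t_{11}(u),t_{11}(v)]$ to vanish'': it only expresses that commutator through products of the odd series. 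Concretely, comparing the coefficients of $u^{0}v^{-2r}$ in the relation as written in the paper gives
\begin{equation*}
[t^{2}_{11},t^{2r}_{11}]\;=\;[t^{1}_{11},t^{2r}_{11}]-\{t^{1}_{-1,1},t^{2r}_{-1,1}\},
\end{equation*}
so even the simplest case of the lemma is equivalent to a nontrivial identity mixing even and odd generators, which can only come from the remaining defining relations (those for $[t_{11}(u),t_{-1,1}(v)]$ and $[t_{-1,1}(u),t_{-1,1}(v)]$ together with $t^{m}_{-i,-j}=(-1)^{m}t^{m}_{i,j}$), not from the displayed generating-function identity or from parity bookkeeping of the sum $\sum_{r}(-1)^{r}\{t^{k+r-1}_{-1,1},t^{m-r}_{-1,1}\}$.

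In the induction version you acknowledge exactly this (``needs one more input''), and your proposed fix --- a simultaneous induction with an auxiliary statement (ii) expressing $\{t^{p}_{-1,1},t^{q}_{-1,1}\}$, $p+q$ odd, through the even diagonal generators --- is the right kind of idea, but statement (ii) is neither formulated precisely nor derived from the odd--odd and mixed relations, so the heart of the argument is still missing. (There is also a small indexing slip: the relevant coefficient is that of $u^{-2k}v^{-2r}$, since $t^{m}_{11}$ sits at $u^{-m}$, not $u^{-m-1}$.) To turn the proposal into a proof you must actually run the two-statement induction using all three families of component relations listed just before the lemma; as written, the proposal stops at the same point where the paper's own sketch stops, while additionally claiming, incorrectly, that the generating-function identity settles it by inspection.
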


\begin{proof}
For $r,k = 0,1$  the formula (\ref{eq:com_t}) is evidently satisfied. Easy to check also that $[t_{11}^{0}, t_{11}^{2r}] = 0$ and $[t_{11}^{0}, t_{-11}^{2r}] = 0$  for all $r \geq 0$.

We suggest that $[t_{11}^{2k}, t_{11}^{2r}] =0$ and let's prove  that $[t_{11}^{2(k+1)}, t_{11}^{2(r+1)}] = 0 $ is also true.
\end{proof}

\vspace{2cm}

\section{Drinfeld Yangian of the queer Lie superalgebra. New system of generators}

In this section we define the analogue of the new system generators which was defined by V. Drinfeld for Yangians of simple Lie algebras.  I recall that for the special case $sq_1$ Drinfeld Yangian was introduced in the previous section. Now we define the Drinfeld Yangians $Y_D(sq_n)$ and $Y_D(Q_n)$ of the queer Lie superalgebras $sq_n$ and $Q_n$.

\begin{theorem} \label{th5.101}
Yangian $Y_D(sq_{n-1})$ is generated by generators
$$\bar{h}_{1,j,m}, \quad \tilde{h}_{i,j,m} = \bar{h}_{i,j,m} - \bar{h}_{i+1,j,m}, \quad  x^{\pm}_{i,j,m},$$
$i \in I = \{1, \ldots, n-1\}, \quad  m \in \mathbb{Z}_+, \quad j \in \mathbb{Z}_2 = \{0, 1\},$

$$\tilde{h}_{i,m} = \tilde{h}_{i,0,m}, \quad k_{i,m}=h_{i,1,m}, \quad x^{\pm}_{i,m} = x^{\pm}_{i,0,m}, \quad \hat{x}^{\pm}_{i,m}=x^{\pm}_{i,1,m},$$ $i \in \{1, \ldots, n-1\}, m \in \mathbb{Z}_+,$
which satisfies the following system of defining relations:

\begin{eqnarray}
&[\tilde{h}_{i_1, 0, m_1}, \tilde{h}_{i_2, 0, m_2}] = 0, \quad \label{eq54296} \\
&[x^+_{i,0,m}, x^-_{j,s,0}]= \delta_{ij}\tilde{h}_{i,s, m}, \quad s \in \{0, 1\} \qquad \label{54297}\\
&[\tilde{h}_{i_1, 1, 2m_1+s}, \tilde{h}_{i_2, 0, 2m_2+s}]=0, \quad s \in \{0, 1\} \quad \label{eq54299} \\
&[\tilde{h}_{i_1, 1, 2m_1+s}, \tilde{h}_{i_2, 1, 2m_2+s}]= (-1)^s2\delta_{i_1, i_2}(\bar{h}_{i_1, 0, 2(m_1 + m_2+s)} + \quad \nonumber \\
&\bar{h}_{i_1+1, 0, 2(m_1 + m_2+s)}) + (-1)^s2(\delta_{i_1+1, i_2} + \delta_{i_1 - 1, i_2})\bar{h}_{i_2, 0, 2(m_1 + m_2+s)}, \quad s \in \{0, 1\} \quad \label{eq54300}\\
&[\tilde{h}_{i_1, 1, 2m_1+1}, \tilde{h}_{i_2, 0, 2m_2}]= [\tilde{h}_{i_1, 1, 2m_1}, \tilde{h}_{i_2, 0, 2m_2+1}]= \quad \nonumber \\
&2\delta_{i_1, i_2}(\bar{h}_{i_1, 1, 2(m_1 + m_2)+1} + \bar{h}_{i_1+1, 1, 2(m_1 + m_2)+1}) - 2(\delta_{i_1+1, i_2}+ \delta_{i_1 - 1, i_2})\bar{h}_{i_2, 0, 2(m_1 + m_2)+1}, \quad  \label{eq54301} \\
&[\bar{h}_{i_1, 1, 2m_1+1}, \bar{h}_{i_2, 1, 2m_2}] = [\bar{h}_{i_1, 1, 2m_1}, \bar{h}_{i_2, 1, 2m_2+1}] = 0, \quad \label{eq54302} \\
&[\tilde {h}_{i, 0, 2m +1}, h_{j,1,r} ] = 2((\delta _{i,j} - \delta _{i,j + 1}
)\bar {h}_{i, 1, 2m + r + 1} + (\delta _{i,j} - \delta _{i, j-1}) \bar{h}_{i +1, 1,  2m + r + 1}); \quad \label{eq54560}\\
&[\tilde {h}_{i, 0, 2m}, \tilde{h}_{j, 1, 2r + 1} ] = 0; \qquad \label{eq54570}\\
&[\tilde{h}_{i,1, 2k}, \tilde{h}_{j, 0, 2l}] = 2(\delta_{i, j} - \delta_{i, j + 1}) \bar {h}_{i, 0, 2(k + l)} + 2(\delta _{i,j} - \delta _{i, j - 1})\bar {h}_{i+1, 0, 2(k + l)}; \quad \label{eq54580}\\
&[\tilde{h}_{i, 1, 2m + 1}, \tilde{h}_{j, 1, 2r} ] = 0; \qquad \label{eq54590}\\
&[x^+_{i, 1, m}, x^-_{j, 1, 2k}]  = [x^+_{i, 1, 2k}, x^{-}_{j, 1, m}]= \delta_{ij}h_{i, 0, m+2k}, \quad \label{eq5431}\\
&-[x^{+}_{i, 1, m},  x^{-}_{j, 1, 2k+1}] = [ x^{+}_{i, 1, 2k+1}, x^{-}_{j, 1, m}]= \delta_{ij}(\bar{h}_{i, 0, m+2k+1}+ \bar{h}_{i+1, 0, m+2k+1}), \quad \label{eq5432}\\
&[\tilde{h}_{i,0, 0}, x^{\pm}_{j,0,l}] = \pm \alpha_j(h_i)x^{\pm}_{j,0,l}, \quad \\
&[\tilde{h}_{i,0,0}, x^{\pm}_{j,1,l}] = \pm\alpha_j(h_i)x^{\pm}_{j,1,l}, \quad \label{eq5433}
\end{eqnarray}
\begin{eqnarray}
&[\tilde{h}_{i, 0, n+1}, x^{\pm}_{j, r, l}] = [\tilde{h}_{i, 0, n}, x^{\pm}_{j, r, l+1}] \pm \dfrac{\alpha_j(h_i)}{2} (\tilde{h}_{i, 0, n}x^{\pm}_{j,r,l} + x^{\pm}_{j,r,l}\tilde{h}_{i,0,n}), \quad \label{eq5450}\\
&[\tilde{h}_{i, 1, n+1}, x^{\pm}_{j, r, l}] = [\tilde{h}_{i, 1, n}, x^{\pm}_{j, r, l+1}] \pm \dfrac{(\delta_{i+1, j} - \delta_{i-1, j})}{2} (\tilde{h}_{i, 1, n}x^{\pm}_{j, r, l} + (-1)^r x^{\pm}_{j,r,l}\tilde{h}_{i,1,n}), \quad \label{eq5451}\\
&[x^{\pm}_{i, 1, m + 1}, x^{\pm}_{j, 0, r} ] - [x^{\pm}_{i, 1, m}, x^{\pm}_{j, 0, r + 1} ] = \pm \dfrac{(\delta_{i+1, j} - \delta_{i-1, j})}{2}(x^{\pm}_{i, 1, m} x^{\pm}_{j, 0, r} + x^{\pm}_{j, 0, r}x^{\pm}_{i, 1, m}), \quad \label{eq5453} \\
&[x^{\pm}_{i, 1, m + 1}, x^{\pm}_{j, 1, r} ] - [x^{\pm}_{i,1,m}, x^{\pm}_{j, 1, r + 1}] = \pm \dfrac{\alpha _j(h_i)}{2}(x^{\pm}_{i,1,m} x^{\pm}_{j, 1, r} - x^{\pm}_{j, 1, r}x^{\pm}_{i,1,m}), \quad \label{eq5454}\\
&[h_{i, 1, m + 1}, x^{\pm}_{j, 0, r}] - [h_{i, 1, m}, x^{\pm}_{j,0, r+ 1}] = \pm \alpha _j(h_i)/2(h_{i,1,m} x^{\pm}_{j, 0, r} + x^{\pm}_{j, 0, r}h_{i,1,m}) + \quad \nonumber \\
&(\pm \delta _{i,j + 1} - \delta _{i + 1,j} )((\bar{h}_{i, 0, m} + \bar{h}_{i+ 1, 0, m} ) x^{\pm}_{j,1,r} + x^{\pm}_{j,1,r} (\bar{h}_{i, 0, m} + \bar{h}_{i+ 1, 0, m} )), \quad \label{eq54550}\\
&\sum \limits_{\sigma \in S_2 } [x^{\pm}_{i, r, \sigma (s_1 )}, [x^{\pm} _{i, r, \sigma(s_2 )}, x^{\pm}_{j,r,s_3}]] = 0; \quad r \in \{0,1\}, i \in I, \quad \label{eq54581}\\
&[[x^{\pm}_{i_1-1, 0, t_1}, x^{\pm}_{i_1,1 , 0}],[x^{\pm}_{i_1, 1, 0}, x^{\pm}_{i_1+1,0,t_1}]] = 0, \quad t_1 \in \mathbb{Z}_+. \quad \label{eq54582}
\end{eqnarray}
We note that the relation (\ref{eq54550}) is a particular case of the relation  (\ref{eq5451}), and the relations  (\ref{eq5431}) -- (\ref{eq5432}) can be replaced by the following relation
\begin{equation}
(-1)^k[x^+_{i,1,m}, x^-_{j,1,k}] = [x^+_{i,1,k}, x^-_{j,1,m}] = \delta_{ij}(h_{i,0,m+k} - (-1)^k h_{i+1, 0, m+k}). \label{eq5434'}
\end{equation}
\end{theorem}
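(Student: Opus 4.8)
The plan is to adapt the classical argument of Drinfeld and S.~Levendorskii relating the ``first'' (twisted current bisuperalgebra) description of a Yangian to its ``new'' realization, working throughout with the deformation $Y_D(sq_{n-1})$ of $U(\mathfrak g[u]^{\tilde\sigma})$ constructed in Section~3; the case $sq_1$ of Section~4 serves as the prototype. Since $Y_D(sq_{n-1})$ is by construction a flat $\mathbb C[[\hbar]]$-deformation of $U(\mathfrak g[u]^{\tilde\sigma})$, it already contains the degree-$0$ generators $\bar h_{i,j,0}, x^{\pm}_{i,j,0}$ (images of the generators of $sq_{n-1}$) and the degree-$1$ generators coming from the elements $g\cdot u$, whose coproducts were fixed in Section~3. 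First I would introduce all higher generators recursively --- $x^{\pm}_{i,r,m+1}$ is \emph{defined} by the bracket with $\tilde h_{i,0,1}$ (respectively $\tilde h_{i,1,1}$) according to (\ref{eq5450}) (respectively (\ref{eq5451})), and similarly for the $\bar h$'s --- and then prove, using the low-degree relations and the super Jacobi identity, that this recursion is unambiguous, i.e.\ the two admissible ways of raising the level agree.

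Next I would check that all the listed relations hold. They split into two families. The ``undeformed'' relations --- (\ref{eq54296}), the Serre-type relations (\ref{eq54581})--(\ref{eq54582}), and every bracket relation carrying no anticommutator --- hold because a degree count for the grading $\deg\hbar=1$ leaves no room for an $\hbar$-correction: their right-hand sides are already of maximal degree, and the corresponding identities are precisely the current-algebra relations of the Proposition of Section~\ref{s22} describing $\mathfrak g[u]^{\tilde\sigma}$. The genuinely deformed relations (\ref{eq5450})--(\ref{eq54550}), recognizable by the anticommutators and the coefficients $\alpha_j(h_i)/2$, are verified by induction on the total level: the base cases (levels $0$ and $1$) are exactly the coproduct/cocycle compatibilities established in Section~3 and the $Y(sq_1)$ computations of Section~4, and the inductive step is a routine application of the super Jacobi identity together with the recursive definitions above.

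It remains to show that there are no further relations. Let $\mathfrak A$ be the abstract associative superalgebra on the stated generators and relations; the previous steps give a surjection $\mathfrak A \twoheadrightarrow Y_D(sq_{n-1})$, surjective because the new generators already generate the classical limit and hence generate $Y_D$ by the graded Nakayama lemma. For injectivity I would show that $\mathfrak A$ is itself a flat deformation of $U(\mathfrak g[u]^{\tilde\sigma})$: filter $\mathfrak A$ by total degree in the generators, note that the listed relations degenerate to the relations of the Proposition of Section~\ref{s22}, so that $\mathrm{gr}\,\mathfrak A$ is a quotient of $U(\mathfrak g[u]^{\tilde\sigma})$, and then prove by a diamond/PBW argument that no dimension is lost; comparing graded dimensions forces $\mathfrak A \cong Y_D(sq_{n-1})$. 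The Hopf superalgebra structure is transported along this isomorphism, the coproduct on the higher generators being determined recursively as above.

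The hard part is precisely this PBW statement for $\mathfrak A$: one must verify that every ambiguity in reordering monomials in the generators is resolvable using \emph{only} the listed relations. The length-three overlaps among the $x^{\pm}_{i,r,m}$ are closed by the Serre relations (\ref{eq54581})--(\ref{eq54582}), and the overlaps of an $h$ with two $x$'s by the anticommutator relations (\ref{eq5450})--(\ref{eq54550}); it is exactly here that the particular choice of current generators adopted in this paper --- different from the one in \cite{St12}, \cite{St21}, \cite{St22} --- makes the verification feasible. A secondary technical nuisance is keeping track of parity signs and the $\sigma$-twisting uniformly, which is controlled by the decomposition (\ref{equation31}) of $\mathfrak g[u]^{\tilde\sigma}$ into its even-degree and odd-degree graded pieces.
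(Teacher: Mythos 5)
The paper does not actually prove Theorem \ref{th5.101}: the theorem is stated and the text passes immediately to generating functions and to the quasideterminant section, the current/new-generator description being attributed to the earlier papers \cite{St21}, \cite{St12}. So your proposal cannot be matched against an argument in the text; judged on its own, it is the standard Levendorskii-type plan (recursively define higher generators, verify the relations in the quantization, then compare the abstract algebra $\mathfrak A$ with $Y_D$ via flatness), which is the right general strategy, but the two load-bearing steps are asserted rather than carried out, and one of the assertions is not correct as stated.

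Concretely: (1) the claim that relations such as (\ref{eq54296}), (\ref{eq54581}), (\ref{eq54582}) hold in $Y_D(sq_{n-1})$ ``because a degree count for $\deg\hbar=1$ leaves no room for an $\hbar$-correction'' does not suffice. Homogeneity only constrains the degree of a possible correction term, it does not kill it; in the classical Drinfeld/Levendorskii argument one must combine homogeneity with weight (root-space) considerations, and for the queer superalgebra the root spaces are two-dimensional (even and odd parts) and there is no invariant form, so exactly this step requires a genuine computation --- it is where the paper's modified choice of current generators is supposed to help, and where a proof must do real work. The same applies to the ``routine induction'' for the deformed relations (\ref{eq5450})--(\ref{eq54550}). (2) The injectivity of $\mathfrak A\twoheadrightarrow Y_D(sq_{n-1})$ is left hanging: you explicitly defer the PBW/diamond-lemma verification, and the alternative route you hint at (degenerating the relations and comparing associated graded algebras with $U(\mathfrak g[u]^{\tilde\sigma})$) silently relies on the classical presentation of $\mathfrak g[u]^{\tilde\sigma}$ in Subsection \ref{s22}, which is itself stated in the paper without proof; moreover one must check that the symbols of your recursively defined generators really map to the expected current-algebra elements, and that the recursion via (\ref{eq5450})--(\ref{eq5451}) is well defined (it needs $\alpha_j(h_i)\neq0$, i.e.\ $j=i$, and a consistency check between raising by $\tilde h_{i,0,1}$ and by $\tilde h_{i,1,1}$), which you flag but do not resolve. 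In short, the outline is sound, but every point at which the queer, twisted, formless situation differs from the ordinary Yangian is precisely where the proposal writes ``routine'' and stops, so it does not yet constitute a proof.
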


Let's introduce, also generating functions of generators:

\begin{eqnarray}
&h_{i,0} :=  1 + \sum_{k=0}^{\infty} h_{i,0,k} u^{-k-1}, \quad i \in I = \{1,2,\ldots, n-1\}, \\
&h_{i,1} := \sum_{k=0}^{\infty} h_{i,0,k} u^{-k-1}, \quad i \in I = \{1,2,\ldots, n-1\}, \\
&x^{\pm}_{i,j} := \sum_{k=0}^{\infty} x^{\pm}_{i,j,k} u^{-k-1} \quad, j =0,1, \quad i \in I = \{1,2,\ldots, n-1\}.
\end{eqnarray}

\vspace{2cm}

\section{Idea of isomorphism between two realization of Yangian of queer Lie superalgebra}

%In this section we construct isomorphism between Drinfel'd Yangian of the queer Lie superalgebra defined in previous section and Nazarov Yangian of the queer Lie superalgebra. This construction based on quasideterminant theory founded by I. Gelfand and  V. Retakh (\cite{G-R1}, \cite{G-R2}).

\subsection{Quasideterminants. Gelfand-Retakh theory}

I recall basic notions of the theory of quasideterminants.

We start from a definition for free skew-fields (see \cite{G-R1} ). Let $I$, $J$ be ordered sets consisting of n elements. Let $A = (a_{i,j})$, $i \in I$,  $j  \in  J$ be a matrix with formal noncommuting entries $a_{i,j}$ . Let us define by induction $n^2$ rational expression $|A|_{pq}$ of variables $a_{i,j}$, $p \in I$, $q \in J$ over a free skew-field generated by $a_{i,j}$’s. We call these expressions the quasideterminants. For $n = 1$ we set $|A|_{pq} = a_{p,q}$. For a matrix $A = (a_{i,j})$, $i \in I$,  $j  \in  J$ of order $n$ we denote $A^{\alpha, \beta}$, $\alpha \in I$,  $\beta  \in  J$ the matrix of order $n-1$ constructed by deleting the row with the index $\alpha$ and the column with the index $\beta$ in the matrix $A$. Suppose that for the given $p \in I$, $q \in J$ expressions $|A^{pq}|^{-1}_{i,j}$, $i \in I$, $j \in J$, $i \neq p$, $j \neq q$ are defined and set
$$|A|_{p,q} = a_{p,q} - \sum a_{p,j} |A^{p,q}|^{-1}_{i,j} a_{i,q}. $$

Here the sum is taken over all $i\in I \backslash \{p\}, j \in J \backslash \{q\}$.

\begin{opr}

The expression $|A|_{p,q}$ is called the quasideterminant of indices $p$ and $q$ of the matrix $A$.

\end{opr}

We will use the following important properties of quasideterminants. \\
1) The quasideterminant $|A|_{pq}$ does not depend of the permutation of rows and columns in the matrix $A$ if the $p$-th row and the $q$-th column are not changed; \\
2)  The multiplication of rows and columns. Let the matrix $B$ be constructed from the matrix $A$ by multiplication of its $i$-th row by a scalar $\lambda$ from the left. Then
$|B|_{kj} = \lambda|A|_{ij}, \quad k=i$,  $|B|_{kj} = |A|_{kj}, \quad k \neq i$ and $\lambda$ is invertible.

Let also the matrix $C$ is constructed from the matrix $A$ by multiplication of its $j$-th column by a scalar $\mu$ from the right. Then

$|C|_{il} = |A|_{ij} \mu, \quad l=j$,  $|B|_{il} = |A|_{il}, \quad l \neq j$ and $\mu$ is invertible. \\
3) The addition of rows and columns. Let the matrix $B$ is constructed by adding to some row of the matrix $A$ its $k$-th row multiplied by a scalar $\lambda$. from the left. Then $|A|_{ij} = |B|_{ij}$, $i= 1, 2, \ldots, k-1, k+1,\ldots, n$, $j = 1,2, \ldots, n$. \\
4) Let the matrix $C$ is constructed by addition to some column of the matrix $A$ its $l$-th column multiplied by a scalar $\lambda$ from the right. Then
$|A|_{ij} = |C|_{ij}$, $j= 1, 2, \ldots, l-1, l+1,\ldots, n$, $i = 1,2, \ldots, n$.

\vspace{2cm}

\subsection{Concept of definition of isomorphism }

We shall use triangular decomposition of matrix with non commuting elements and corresponding decomposition of quasideterminant.

Let
\begin{equation}
T(u) = X^{-}(u)\cdot D(u) \cdot X^{+}(u).
\end{equation}

\begin{eqnarray}
&x^-_{\alpha, \beta} = q^{1, \ldots , \alpha - 1}_{\beta, \alpha}(B^{\alpha}), \quad 1 \leq \alpha < \beta \leq n, \quad\\
& x^+_{\alpha, \beta} = q^{1, \ldots , \alpha - 1}_{\beta, \alpha}(C^{\alpha}), \quad 1 \leq \alpha < \beta \leq n, \quad\\
&d_{kk}= |A_k|_{kk}, \quad k = 1, \ldots, n.  \qquad
\end{eqnarray}

Here

\begin{eqnarray}
&A^k = (a_{ij}), \quad, i,j = k, k+1, \ldots, n, \quad\\
&B^{k} = (a_{ij}), \quad i= 1, 2, \ldots, k, \quad j = 1, \ldots , n, \quad \\
&C^{k} = (a_{ij}), \quad i= 1, 2, \ldots, n, \quad j = 1, \ldots , k \quad
\end{eqnarray}

and $q^{1, \ldots , \alpha - 1}_{\beta, \alpha}$ be a left quasi-Plucker coordinates, $r^{1, \ldots , \alpha - 1}_{\beta, \alpha}$  be a right quasi-Plucker coordinates (see \cite{GR1}.

\vspace{0.5cm}

\begin{conj}
The following map
\begin{equation}
 \Phi : Y_N(q_n) \rightarrow Y_D(sq_n)
\end{equation}
is defined by the following formulas
\begin{eqnarray}
&x^{+}_{i,0}(u) = \Phi(x^-_{i+1, i}(u)), \quad i \in I, \quad \\
&x^-_{i,0}(u) = \Phi(x^+_{i, -i-1}(u)), \quad i \in I, \quad \\
&x^{+}_{i,1}(u) = \Phi(x^-_{i+1, -i}(u)), \quad i \in I, \quad \\
&x^{-}_{i,1}(u) = \Phi(x^+_{i, i+1}(u)), \quad i\in I, \quad \\
&h_{i,0}(u) = \Phi(d^+_{i, i}(u)), \quad i\in I, \quad \\
&h_{i,1}(u) = \Phi((d_{i, -i}(u))^{-1})\Phi(d_{i+1, -i-1}(u)), \quad i\in I. \quad
\end{eqnarray}
is epimorphism.
\end{conj}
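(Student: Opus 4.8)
The plan is to verify that the assignment $\Phi$ on generating series is compatible with the defining relations of $Y_N(q_n)$ given in \eqref{eq:nq1}--\eqref{eq:nq11}, so that $\Phi$ is a well-defined algebra homomorphism, and then to show surjectivity by producing preimages of all the Drinfeld generators. The technical engine is the triangular (Gauss) decomposition $T(u)=X^-(u)\,D(u)\,X^+(u)$ together with the quasideterminant formulas for its entries: the diagonal factors $d_{kk}(u)=|A_k|_{kk}$ and the off-diagonal factors expressed through left/right quasi-Pl\"ucker coordinates. One first establishes, exactly as in the $q_1$ case treated in Theorem~\ref{th:02} and the Gelfand--Retakh/Molev computations, that in the Gauss decomposition the entries $d_{i,\pm i}(u)$ and the quasi-Pl\"ucker coordinates satisfy ``decoupled'' current relations: the commutator of $d_i$ with $d_{i+1}$ involves only the $x^\pm_{i,j}$ attached to the single node $i$, and the relations among $x^+_{i,r}$, $x^-_{j,s}$, $h_{k,t}$ coming from rewriting \eqref{eq:nq1} in the new variables are precisely the relations \eqref{eq54296}--\eqref{eq54582} of Theorem~\ref{th5.101}, after discarding the odd-index even generators $h_{i,0,2k+1}$ that span the kernel (as in Theorem~\ref{th:02}).

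Concretely I would proceed in the following steps. First, rewrite the RTT relation $(R\otimes 1)T_1T_2=T_2T_1(R\otimes1)$ in block form after substituting the Gauss factorization, isolating the relations that only involve a $2\times 2$ submatrix formed by rows/columns $\{i,i+1\}$ (for the even simple roots $\alpha_i=\varepsilon_i-\varepsilon_{i+1}$) and the rows/columns $\{i,-i\}$, $\{i+1,-i-1\}$ (for the odd part); the queer $R$-matrix $R(u,v)=1-\frac{P}{u-v}+\frac{PJ_1J_2}{u+v}$ is built so that these sub-blocks close. Second, using properties (1)--(4) of quasideterminants (invariance under row/column permutations fixing the pivot, and the row/column scaling and addition rules) reduce each such sub-relation to an identity already known in rank one, i.e.\ to the $Y_N(q_1)$ computation, yielding the Drinfeld-type current relations for $h_{i,0}(u),h_{i,1}(u),x^\pm_{i,0}(u),x^\pm_{i,1}(u)$. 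Third, check the Serre-type relations \eqref{eq54581}--\eqref{eq54582}: these require the genuinely higher-rank input, namely that for $|i-j|=1$ the relevant $3\times3$ quasi-Pl\"ucker identities hold, while for $|i-j|>1$ the corresponding matrix entries commute because the sub-blocks are disjoint. Fourth, verify surjectivity: the image contains all $h_{i,0}(u)=\Phi(d^+_{i,i}(u))$, all $h_{i,1}(u)$, and all $x^\pm_{i,j}(u)$ by definition, and these generate $Y_D(sq_n)$ by Theorem~\ref{th5.101}; one also checks that the odd generators $h_{i,0,2k+1}$, $k\ge1$, which do not appear among the $Y_D$ generators, lie in the kernel, so $\Phi$ descends as claimed. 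Finally, confirm compatibility with the $\mathbb Z_2$-grading and, if desired, with the coproducts \eqref{eq:comny1} versus the Drinfeld coproduct.

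The main obstacle I expect is the third step: controlling the Serre relations \eqref{eq54581}--\eqref{eq54582} and, more generally, the ``mixed'' relations \eqref{eq5450}--\eqref{eq54550} that tie together the even and odd current generators with the shift-by-one in the spectral parameter. In the non-super Yangian case these follow from a careful but standard manipulation of quasi-Pl\"ucker coordinates (Gelfand--Retakh, Molev); here the Clifford-type sign coming from $p(t^m_{1,-1})=1$ and the twisted reality condition $t_{i,j}(-u)=t_{-i,-j}(u)$ interact with the quasideterminant expansions, so one must track parities through every application of properties (2)--(4) and through the induction on matrix size. A secondary technical point is justifying that the Gauss decomposition exists and is unique over $Y_N(q_n)[[u^{-1}]]$ (the leading diagonal minors of $T(u)$ are invertible formal series because their constant terms are $1$), which is needed before any of the quasideterminant identities can even be written down; this is routine but should be stated explicitly. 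I would handle the sign bookkeeping by working throughout with the queer $R$-matrix in the form above and reducing, node by node, to the already-verified rank-one statement of Theorem~\ref{th:02}, so that the only genuinely new computations are the adjacent-node ($|i-j|=1$) Serre checks.
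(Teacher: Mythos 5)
The statement you are addressing is left as a \emph{conjecture} in the paper: Section 6 only records the Gauss decomposition $T(u)=X^{-}(u)D(u)X^{+}(u)$, the quasideterminant and quasi-Pl\"ucker formulas for its factors, and the formulas defining $\Phi$, and the introduction explicitly defers the proof of the general case to the second part of the work. Your plan follows exactly this intended route --- node-by-node reduction to the rank-one case of Theorem~\ref{th:02}, use of the quasideterminant properties (1)--(4), Serre relations from quasi-Pl\"ucker identities, and surjectivity from the fact that the images of the $t_{i,j}(u)$ under $\Phi$ include all the current generators of Theorem~\ref{th5.101} --- so in terms of strategy you are in agreement with the author; there is simply no written-out argument in the paper to compare against.

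As a proof, however, your text has the same gaps as the paper, and you flag the decisive ones yourself: the compatibility of $\Phi$ with the relations \eqref{eq:nq1}, \eqref{eq:nq11} is asserted, not computed. The adjacent-node Serre relations \eqref{eq54581}--\eqref{eq54582}, the mixed even/odd relations \eqref{eq5450}--\eqref{eq54550} with the shift in the spectral parameter, and the parity bookkeeping forced by $t_{i,j}(-u)=t_{-i,-j}(u)$ are precisely where the queer case departs from the $\mathfrak{gl}_n$ Yangian and where no reduction to Theorem~\ref{th:02} is available, so these cannot be waved through by analogy with Gelfand--Retakh/Molev. Two further points are not addressed at all: the map on the odd Cartan currents is $h_{i,1}(u)=\Phi\bigl((d_{i,-i}(u))^{-1}\bigr)\Phi\bigl(d_{i+1,-i-1}(u)\bigr)$, which involves the off-diagonal entries of the middle factor of the Gauss decomposition rather than the leading diagonal minors, so your invertibility remark (constant term $1$) does not cover it and one must check both that this expression makes sense and that it satisfies the $h_{i,1}$-relations of Theorem~\ref{th5.101}; and the identification of the kernel (the higher-rank analogue of the ideal generated by $h_{0,2k+1}$ in Theorem~\ref{th:02}) is carried over from rank one without argument, although it is needed even to state that $\Phi$ lands in, and is onto, $Y_D(sq_n)$. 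So the proposal is a faithful restatement of the intended strategy, but none of the computations that would turn the conjecture into a theorem are actually performed.
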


\vspace{1cm}

My research is supported by grant "Domestic Research in Moscow"' of Interdisciplinary Scientific Center J.-V. Poncelet (CNRS UMI 2615) and Scoltech Center for Advanced Study.

%\newpage

%\newpage

\vspace{2.5cm}

%The work is supported by grant "Domestic Research in Moscow"' of Interdisciplinary Scientific Center J.-V. Poncelet (CNRS UMI 2615) and Scoltech Center for Advanced Study.

\end{document}